\title[Parametrized cobordism categories \dots ]{Parametrized cobordism categories and the Dwyer--Weiss--Williams 
index theorem}
\author{George Raptis}
\address{Fakult\"{a}t f\"ur Mathematik \\
Universit\"{a}t Regensburg \\
93040 Regensburg, Germany}
\email{georgios.raptis@ur.de}
\author{Wolfgang Steimle}
\address{Universit\"at Augburg\\
               Institut f\"ur Mathematik\\
               D-86135 Augsburg, Germany}
\email{wolfgang.steimle@math.uni-augsburg.de}
\keywords{Cobordism category, bivariant $A$-theory, parametrized Euler characteristic.}
\subjclass[2000]{19D10, 57R90, 55R70, 55R12.}
\DeclareMathOperator{\colim}{colim}
\DeclareMathOperator{\fib}{fib}
\DeclareMathOperator*{\hocolim}{hocolim}
\DeclareMathOperator*{\holim}{holim}
\DeclareMathOperator{\id}{id}
\DeclareMathOperator{\map}{map}
\DeclareMathOperator{\simp}{simp}
\newcommand{\sections}[2]{\Gamma\biggl(\begin{array}{c} {#1}\\ \downarrow\\ {#2}\end{array} \biggr)}
\newcommand{\Gr}{\mathrm{Gr}}
\newcommand{\cat}{\mathbf{Cat}}
\newcommand{\op}{^{op}}
\newcommand{\inv}{^{-1}}
\newcommand{\fibr}[2]{\begin{pmatrix} {#1} \\ \downarrow \\ {#2}\end{pmatrix}}
\newcommand{\RR}{\mathbb{R}}
\newcommand{\calU}{\mathcal{U}}
\newcommand{\calC}{\mathcal{C}}
\newcommand{\calA}{\mathcal{A}}
\newcommand{\dell}{\partial}
\newcommand{\Rfd}{\mathcal{R}^{hf}}
\newcommand{\tr}{\operatorname{PT}}
\newcommand{\BGtr}{\operatorname{tr}}
\newcommand{\Path}{\operatorname{Path}}
\newcommand{\univ}{\mathrm{univ}}
\newcounter{commentcounter}
\newcommand{\addws}[1]{#1}
\begin{document}

\theoremstyle{plain}
\newtheorem{thm}{Theorem}
\newtheorem{cor}[thm]{Corollary}
\newtheorem{lem}[thm]{Lemma}
\newtheorem{prop}[thm]{Proposition}
\newtheorem{claim}[thm]{Claim}
\theoremstyle{definition}
\newtheorem{defn}[thm]{Definition}
\newtheorem{obs}[thm]{Observation}
\newtheorem{constr}[thm]{Construction}
\newtheorem*{indextheorem}{Index Theorem}

\theoremstyle{remark}
\newtheorem{rem}[thm]{Remark}
\newtheorem*{rem*}{Remark}
\newtheorem*{ex*}{Example}

\setcounter{secnumdepth}{2}
\numberwithin{thm}{section}

\SelectTips{eu}{10}
\renewcommand{\theenumi}{\roman{enumi}}
\renewcommand{\labelenumi}{\textup{(}\theenumi\textup{)}}
\renewcommand{\theequation}{\arabic{equation}}

\begin{abstract}
We define parametrized cobordism categories and study their formal properties as bivariant theories. Bivariant transformations to a strongly excisive bivariant theory give rise to 
characteristic classes of smooth bundles with strong additivity properties. In the case of cobordisms between manifolds with boundary, we prove that such a bivariant transformation
is uniquely determined by its value at the universal disk bundle. This description of bivariant transformations yields a short proof of the Dwyer-Weiss-Williams family index theorem for the parametrized 
$A$-theory Euler characteristic of a smooth bundle. 
\end{abstract}

\maketitle


\section{Introduction}

Let $M$ be a compact smooth manifold embedded in $\RR^N$. A classical theorem \cite[Theorem 2.4]{Becker-Gottlieb(1975)} says that the Euler characteristic of $M$ is equal to 
the mapping degree $d(M)$ of the map 
$$S^N\to S^N$$ 
that collapses $x \in S^N =\RR^N\cup \{\infty\}$ to $\infty$ if $x$ lies outside an $\varepsilon$-neighborhood of $M$, 
and sends it to the difference vector (suitably scaled) from $x$ to its closest point in $M$, otherwise. 

This can also be stated as follows. The $A$-theory characteristic of a compact manifold $M$ is a refinement of the Euler characteristic of $M$ to a point $\chi(M)\in A(M)$, Waldhausen's algebraic $K$-theory of the space $M$ \cite{Waldhausen(1985)}. More precisely, and assuming 
for simplicity that $M$ is connected, the path component of $\chi(M)$ in $\pi_0 A(M)$ corresponds under the canonical isomorphism
\[\pi_0 A(M)\cong\mathbb Z,\]
to the classical Euler characteristic \addws{of $M$}. Moreover, the degree of the collapse map of $M$ described above refines to an element $\BGtr(M)\in Q(M_+)$, the stable homotopy of $M$ 
with a disjoint base-point. Then, for connected $M$, the theorem says that under the canonical isomorphism
\[\pi_0 Q(M_+)\cong\mathbb Z,\]
the classes of $\BGtr(M)$ and $\chi(M)$ agree.

These refined invariants can be naturally extended to interesting invariants of \addws{fiber} bundles of compact manifolds. A \addws{fiber} bundle of compact smooth manifolds $p: E \to B$
has a parametrized $A$-theory characteristic and a parametrized transfer map
\[\chi(p)\in \sections{A_B(E)}{B},\quad \BGtr(p)\in\sections{(Q_+)_B(E)}{B}\]
which are sections of associated fibrations over $B$ whose fibers at $b\in B$ are the spaces $A(p\inv(b))$ and $Q(p\inv (b)_+)$, respectively. Moreover, there is a 
``unit'' map $\eta\colon Q(M_+)\to A(M)$ which is the natural infinite loop space map that is determined by the fact that it sends $1\in \pi_0 Q(S^0)=\mathbb Z$ 
to $1\in \pi_0 A(\{*\})=\mathbb Z$. 

The Dwyer-Weiss-Williams smooth index theorem \cite{Dwyer-Weiss-Williams(2003)} 
is a family version of the aforementioned classical result that identifies the parametrized $A$-theory characteristic of a 
\addws{fiber bundle of compact smooth manifolds} with the composition of the 
parametrized transfer map followed by the (parametrized) unit map. 

\begin{indextheorem}[{\cite[Theorem 8.5]{Dwyer-Weiss-Williams(2003)}}]
Let $p: E \to B$ be a bundle of compact smooth manifolds. Then $\chi(p): B \to A_B(E)$ is (fiberwise) homotopic to the composite map 
$B \xrightarrow{\BGtr(p)} (Q_+)_B(E) \xrightarrow{\eta} A_B(E)$.
\end{indextheorem}

The proof in \cite{Dwyer-Weiss-Williams(2003)} is quite intricate. Firstly, it is obtained from an analogous theorem (also in \cite{Dwyer-Weiss-Williams(2003)}) 
for bundles of compact topological manifolds, whose proof involves constructions of controlled algebraic $K$-theory spectra in order to model the assembly map for 
$A$-theory. Secondly, this topological version of the index theorem is based on a comparison between specific characteristic classes for topological euclidean 
$n$-bundles. 

\addws{
In this note we give a new proof of the Index Theorem which only involves the smooth category.
The main idea is to make systematic use of the fact that both characteristic classes $\chi(p)$ and $\eta \circ \BGtr(p)$ satisfy very strong naturality and additivity properties. 

More rigorously, we introduce \emph{parametrized cobordism categories} (of manifold bundles with boundaries) which, for purely formal reasons, give rise to a \emph{bivariant theory} in the sense that there are covariant and contravariant functorial operations which are compatible with each other. It turns out that both characteristic classes $\chi(p)$ and $\BGtr(p)$ extend to \emph{bivariant transformations} out of this newly defined bivariant theory; this is supposed to reflect the naturality and additivity properties of these two constructions. 

In Theorem \ref{thm:factorization_result} we prove a stronger version of the Index Theorem which asserts that the bivariant transformations extending $\chi(p)$ and $\eta\circ \BGtr(p)$ agree in the homotopy category of bivariant theories. Informally, this means that, there are homotopies 
$$\chi(p)\simeq \eta\circ \BGtr(p)$$ 
as in the formulation of the Index Theorem above which are in addition compatible with the naturality and additivity properties of these two characteristic classes.

It turns out that this stronger version of the Index Theorem has a comparatively simple proof. Indeed, we show in Theorem \ref{thm:mapping_property} that two bivariant transformations out of the bivariant cobordism category agree in the homotopy category provided only that the corresponding characteristic classes agree on \emph{linear disk bundles}  -- this is a formal consequence of a theorem of Genauer \cite{Genauer(2008)} on the homotopy type of the cobordism 
category of manifolds with boundary. But both our characteristic classes can be easily computed on disk bundles and the computation shows that they indeed agree.

\medskip 

The paper is organized as follows. In Section 2, we define parametrized cobordism categories and discuss their properties as bivariant theories. In Section 3, we state the \emph{mapping property} of the parametrized cobordism category with boundaries (Theorem \ref{thm:mapping_property}); this is the classification result for bivariant transformations mentioned above. At the beginning of Section 4, we discuss a generalization of the coassembly construction to bivariant theories and its formal properties. Then we give a description of the coassembly map for the bivariant cobordism category of manifolds with boundary, in terms of a parametrized Pontryagin-Thom construction, and use this to prove the mapping property. In Section 5, we define a bivariant transformation which extends $\chi(p)$ to the bivariant cobordism category, generalizing the construction of maps defined in \cite{Boekstedt-Madsen(2014), Raptis-Steimle(2014), Tillmann(1999)}. Then we prove our main result, the Index Theorem for this bivariant transformation (Theorem \ref{thm:factorization_result}), and deduce the Dwyer-Weiss-Williams Index Theorem in the form that was stated above. 
}

\section{Parametrized cobordism categories}

Let $B$ be a space. We write $BO(d)=\Gr_d(\RR^{d+\infty})$ for the Grassmannian of $d$-dimensional linear subspaces 
in $\RR^{d+\infty}$ \addws{where $\RR^{d + \infty} \colon = \colim_{n \to \infty} \RR^{d + n}$}. We call a fibration 
\[\theta\colon X\to BO(d)\times B\]
a \emph{parametrized $d$-dimensional 
tangential structure} over $B$. 

There is a category $\calC(\theta)$ of parametri\-zed $\theta$-cobordisms 
over $B$ defined as follows. An object in $\calC(\theta)$ is given by a quadruple $(E, p, a, l)$ where:
\begin{itemize}
\item[(i)] $a\in\RR$,
\item[(ii)] $p\colon E\to B$ is a fiber bundle of smooth closed $(d-1)$-dimensional manifolds, which is fiberwise \addws{smoothly}
embedded in $B\times \{a\}\times \RR^{d-1+\infty}$,
\item[(iii)] $l$ is tangential $\theta$-structure, i.e., a lift in the following diagram:
\[\xymatrix{
 && X \ar[d]^\theta\\
 E \ar[rr]_(0.4){(\epsilon\oplus T^vE,p)} \ar@{.>}[rru]^l && BO(d)\times B
}\]
where  $\epsilon\oplus T^v E$ is the once-stabilized fiberwise Gau\ss{} map to the Grassmannian $BO(d)$, classifying the once-stabilized vertical tangent bundle \addws{of $p \colon E \to B$.}
\end{itemize}
\addws{When $B$ is not compact, a fiberwise \emph{smooth} embedding as in (ii) means that the fiberwise embedding of $E$ restricts to fiberwise smooth embeddings in $B \times \RR^{d-1 + N}$, 
for some $N > 0$, over each compact subset of $B$. We refer the reader to \cite{Crabb-James} for general background material on fiberwise differential topology.} A morphism in $\calC(\theta)$ consists of a fiber bundle of compact smooth $d$-manifolds,
$$p: W\to B,$$ 
embedded fiberwise in $B\times [a_0,a_1]\times \RR^{d-1+\infty}$ and cylindrically near $a_0$ and $a_1$, together with 
a tangential $\theta$-structure $l_W\colon W\to X$ which lifts $p$ and the fiberwise Gau\ss{} map along $\theta$. The domain and target of this morphism are the intersections $W_0$ and $W_1$ of $W$ with $\{a_0\}\times\RR^{d-1+\infty}$ and 
$\{a_1\}\times \RR^{d-1+\infty}$ respectively, together with the restrictions of $l_W$ to these subsets. This is well-defined because the once-stabilized Gau\ss{} maps of $W_0$ and $W_1$ are precisely the restrictions of the Gau\ss{} map of $W$, since $W$ is embedded cylindrically near the boundary. 
Thus, we obtain a (non-unital) category where the composition of morphisms is given by union of subsets in $B \times \RR^{d + \infty}$. 

One could give this category a topology using the usual methods (see ~\cite{GMTW(2009)}) but we find it easier to extend $\calC(\theta)$ to a simplicial category, that is, 
a simplicial object in the category $\cat$ of (small, non-unital) categories. To do this, we first discuss the naturality properties of $\calC(\theta)$ with respect 
to $\theta$.

Given a pull-back diagram
\begin{equation}\label{eq:pullback_square}
 \xymatrix{
 X' \ar[rr] \ar[d]^{\theta'} && X \ar[d]^\theta\\
 BO(d) \times B' \ar[rr]^{\id \times f} &&  BO(d) \times B
}
\end{equation}
we get an induced \emph{pull-back functor}
\[f^*\colon \calC(\theta)\to\calC(\theta')\]
which is defined by taking pull-backs of bundles along $f$, and using the pull-back property of 
\eqref{eq:pullback_square} to define the required tangential structures. 

On the other hand, if $g\colon \theta\to \eta$  is a fiberwise map of fibrations over $BO(d)\times B$, then post-composing the tangential $\theta$-structure
with $g$ defines a \emph{push-forward functor}
\[g_*\colon \calC(\theta) \to \calC(\eta).\]
The definitions of $f^*$ and $g_*$ are clearly functorial and commute with each other\footnote{The careful reader may object that the definition of $f^*$ depends not only on $f$, 
but also on the choices of pull-backs. We resolve this issue by requiring that $X$ is a subset of $BO(d)\times B\times \calU$ where $\calU$ is a fixed set of high cardinality and the map to $BO(d) \times B$ is the projection. Then the total space of $\theta'$ may be canonically defined as a subset of $BO(d)\times B'\times \calU$. This also guarantees that the covariant and the contravariant operations commute with each other.}.  
We summarize this by saying that the rule $$\calC \colon \theta\mapsto \calC(\theta)$$ is a \emph{bivariant theory} with values in $\cat$.

\begin{rem}
We will only consider bivariant theories which are defined on the class of fibrations $X \to BO(d) \times B$, for fixed $d$, but one can extend the notion to a more general context. A general abstract notion of a bivariant theory was introduced in \cite{Fulton-MacPherson(1981)}. In that setting it is also required that the theory has product operations, 
which we do not have nor need in our context. 

\end{rem}

To each bivariant theory $F: \theta \mapsto F(\theta)$ \addws{with values in a category $\calA$}, we can formally associate a \emph{simplicial thickening}, denoted $F_{\bullet}$. This is a new bivariant theory with values in the category $\calA^{\Delta^{\op}}$ of simplicial objects in $\calA$. The 
value of the simplicial object $F(\theta)_\bullet$ at 
$[n]$ is given by
\[F(\theta)_n:=F(\theta\times \id_{\Delta^n}),\]
where 
\[\theta\times\id_{\Delta^n}\colon X\times \Delta^n\to BO(d)\times B\times \Delta^n\]
is the fibration pulled back from $\theta$ using the projection map $f\colon B\times \Delta^n\to B$. A simplicial operator $[n]\to [m]$ induces a pull-back diagram of 
fibrations and hence a functor $F(\theta)_m\to F(\theta)_n$. The pull-back and push-forward operations of $F$ clearly extend to 
$F_\bullet$ and give $F_\bullet$ the structure  of a bivariant theory with values in \addws{$\calA^{\Delta^{\op}}$}.

\medskip 

\addws{Applying this construction to the bivariant theory $\calC \colon \theta \mapsto \calC(\theta)$, we obtain our 
model for the parametrized $\theta$-cobordism category and its classifying space. Recall that the classifying space 
of a non-unital category is the geometric realization of its nerve regarded as a semi-simplicial set.}

\addws{
\begin{defn}
We call the simplicial object $\calC(\theta)_{\bullet} \in \cat^{\Delta^{\op}}$ the \emph{parametrized $\theta$-cobordism category}. The \emph{classifying space} of $\calC(\theta)_{\bullet}$, denoted $B \calC(\theta)_{\bullet}$, is the (fat) geometric realization of the degree-wise classifying spaces, 
$$B \calC(\theta)_{\bullet} \colon = \vert [n] \mapsto B(\calC(\theta)_n) \vert.$$
\end{defn}
}

\addws{
Thus the rule 
$B \calC_{\bullet}: \theta \mapsto B \calC(\theta)_{\bullet}$ defines a bivariant theory with values in the category of spaces.} In the case where $B$ is the one-point 
space and $\theta\colon  X \to BO(d)$ is a fibration, we recover a simplicial version of the $\theta$-cobordism category (with discrete cuts) from \cite{GMTW(2009)}. 
Our model in this case is closely related to the sheaf model for the cobordism category as defined in \cite[2.3]{GMTW(2009)}. A related notion of a parametrized 
cobordism category was also considered in a different context in \cite{Stolz-Teichner(2011)}.

\medskip 

By construction, the pull-back and push-forward operations on $\calC(\theta)_{\bullet}$ are enriched over simplicial sets in the following ways: 
\begin{itemize}
\item[(i)] for two spaces over $B$, 
$$f'\colon  B' \to B, \ \ f''\colon  B'' \to B,$$
and a continuous map $h: B''\times \Delta^n\to B'$ of spaces over $B$, there is a natural simplicial functor
\[\calC(\theta')_{\bullet} \times \Delta^n_{\bullet} \to \calC(\theta'')_{\bullet}\]
where $\theta'$, $\theta''$ denote the fibrations over $BO(d) \times B'$ and $BO(d) \times B''$ respectively, which 
are pulled back from the parametrized tangential structure $\theta: X \to BO(d) \times B$ over $B$. This is defined in simplicial 
degree $k$ by
\[
\calC(\theta')_k \xrightarrow{h^*} \calC(\theta'' \times \mathrm{id}_{\Delta^n})_k \xrightarrow{(\mathrm{id}_{B''} \times (|\alpha|, \mathrm{id}_{\Delta^k}))^*} \calC(\theta'')_k
\]
for each $k$-simplex $\alpha: \Delta^k_{\bullet} \to \Delta^n_{\bullet}$.
\item[(ii)] for two parametrized tangential structures over $B$, 
$$\theta\colon X \to BO(d) \times B, \ \ \eta: Y \to BO(d) \times B,$$
and a continuous map $h: X \times \Delta^n\to Y$ over $BO(d)\times B$, there is a simplicial functor
\[\calC(\theta)_{\bullet} \times \Delta^n_{\bullet} \to \calC(\eta)_{\bullet}\]
which is defined in simplicial degree $k$ by 
\[
\calC(\theta)_k \xrightarrow{(\mathrm{id}_X \times (|\alpha|, \mathrm{id}_{\Delta^k}))_*}  \calC(\theta \times \Delta^n)_k
\xrightarrow{h_*} \calC(\eta)_k
 \]
for each $k$-simplex $\alpha: \Delta^k_{\bullet} \to \Delta^n_{\bullet}$. 
\end{itemize}

As a consequence of this enrichment, the bivariant theory $\theta \mapsto B \calC(\theta)_{\bullet}$ preserves fiberwise homotopies, 
contravariantly over $B$ and covariantly over $BO(d) \times B$, for any $B$. 

\begin{defn}
A bivariant theory with values in spaces is \emph{homotopy invariant} if the following hold: 
\begin{enumerate}
\item[(a)] if $g\colon \theta\to \eta$ is a homotopy equivalence between the total spaces, then $g_*$ is a homotopy equivalence.
\item[(b)] if $f\colon B' \to B$ is a homotopy equivalence, then so is $f^*$.
\end{enumerate}
\end{defn}

\begin{prop}\label{lem:homotopy_invariance} 
The bivariant theory $B\calC_\bullet$ is homotopy invariant.
\end{prop}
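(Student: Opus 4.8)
The plan is to derive both parts formally from the simplicial enrichment established above, which already shows that $B\calC(-)_\bullet$ carries \emph{fiberwise} homotopies to genuine homotopies (after passing to classifying spaces): contravariantly for homotopies over $B$, covariantly for homotopies over $BO(d)\times B$. The only real task is therefore to manufacture such fiberwise data out of an abstract homotopy equivalence, and for this I would use two standard inputs: Dold's theorem, that a map of Hurewicz fibrations over a common base which is a homotopy equivalence of total spaces is a fiber homotopy equivalence; and the replacement of a map by a Hurewicz fibration via the mapping path space.

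For (a): both $\theta$ and $\eta$ are fibrations over $BO(d)\times B$ and $g$ is a map over $BO(d)\times B$, so by Dold's theorem $g$ admits a fiber homotopy inverse $g'\colon\eta\to\theta$ over $BO(d)\times B$, together with homotopies $g'g\simeq\id_X$ and $gg'\simeq\id_Y$ that are themselves over $BO(d)\times B$. Feeding these into the covariant part of the enrichment gives $g'_*g_*\simeq\id$ and $g_*g'_*\simeq\id$, so $g_*$ is a homotopy equivalence.

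For (b): I would factor $f$ as $B'\xrightarrow{\iota}\overline{B}\xrightarrow{q}B$, where $\overline{B}$ is the mapping path space of $f$, $q$ is the resulting Hurewicz fibration, and $\iota$ is the inclusion of constant paths; then $f^*=\iota^*\circ q^*$, and it is enough to show that each factor induces a homotopy equivalence. Since $f$ is a homotopy equivalence, so is $q$; being in addition a Hurewicz fibration, $q$ is a fiber homotopy equivalence over $B$, so it admits an honest section $\sigma$ (with $q\sigma=\id_B$) together with a homotopy $\sigma q\simeq\id_{\overline{B}}$ over $B$, and the contravariant part of the enrichment over the base $B$ then shows that $\sigma^*q^*$ and $q^*\sigma^*$ are both homotopic to the identity; hence $q^*$ is a homotopy equivalence. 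For $\iota$, the projection $\pi\colon\overline{B}\to B'$ satisfies $\pi\iota=\id_{B'}$, and contracting paths gives a homotopy $\iota\pi\simeq\id_{\overline{B}}$ over $B'$, so the contravariant enrichment over the base $B'$ shows that $\iota^*$ is a homotopy equivalence with inverse $\pi^*$. The one place that needs care — and the main, if modest, obstacle in an otherwise formal argument — is that over $BO(d)\times\overline{B}$ the tangential structure $q^*\theta$ that actually occurs in $f^*=\iota^*\circ q^*$ is not literally the one, $\pi^*(\iota^*q^*\theta)=(f\pi)^*\theta$, to which the enrichment directly applies; the two differ only by a fiberwise homotopy equivalence induced by the evaluation homotopy from $f\pi$ to $q$, and by part (a) such a fiberwise homotopy equivalence has no effect on $B\calC(-)_\bullet$, so interpolating through it (using the naturality of $\iota^*$) still yields that $\iota^*$, and hence $f^*=\iota^*\circ q^*$, is a homotopy equivalence.
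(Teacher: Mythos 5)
Your proof is correct, and part (a) is essentially identical to the paper's (the paper's phrase ``it is well-known that $g$ is automatically a fiberwise homotopy equivalence'' is exactly Dold's theorem, followed by the covariant enrichment as you do). For part (b) you take a genuinely different, though closely related, route. The paper first invokes the 2-out-of-6 property to reduce to the case of a self-map $g\colon B\to B$ homotopic to the identity, and then, writing the homotopy as a map $h\colon B\times[0,1]\to B$, reduces further to showing that the endpoint inclusions $k_i\colon B\to B\times[0,1]$ induce homotopy equivalences; to put the tangential structure over the cylinder in the form required by the enrichment, it uses that any fibration over $BO(d)\times B\times[0,1]$ is fiber homotopy equivalent to a product and then applies part (a). You instead factor $f$ through its mapping path space as a section $\iota$ followed by a Hurewicz fibration $q$, treat $q^*$ directly via Dold and the contravariant enrichment, and handle the same structural mismatch for $\iota^*$ by comparing $q^*\theta$ with $(f\pi)^*\theta$ via part (a). Both arguments lean on (a) at exactly the point where the pulled-back structure is not in the form the enrichment wants; the paper's reduction is slightly leaner because the cylinder case avoids the extra comparison you flag as ``the one place that needs care,'' but your version is a legitimate alternative and your handling of that delicate step, while terse, can be made precise.
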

\begin{proof}
(a) As the push-forward operation is simplicial by (ii) above, it sends fiberwise homotopies to homotopies and hence 
fiberwise homotopy equivalences to homotopy equivalences. But it is well-known that $g: \theta \to \eta$ is automatically 
a fiberwise homotopy equivalence if it is a homotopy equivalence on total spaces.

(b) By naturality and the 2-out-of-6 property, it is enough to show that if $f$ is homotopic to an identity map, 
then $f^*$ is a homotopy equivalence. Again by naturality, it is enough to show that the endpoint inclusions 
$k_i\colon B\times \{i\}\to B\times [0,1]$, for $i=0,1$, induce homotopy equivalences $(k_i)^*$ 
for a given fiberwise tangential structure $\eta$ over $B\times [0,1]$. But fibrations over $BO(d)\times B\times [0,1]$ 
are fiber homotopy equivalent to product fibrations, so by part (a), we may assume that the fiberwise tangential structure 
over $B\times [0,1]$ is pulled back from a structure $\theta$ over $B$ via the projection map $B\times [0,1]\to B$. 

Note that $k_i\colon B\to B\times [0,1]$ is a homotopy equivalence \emph{over $B$}. It then follows, using the simplicial 
enrichment explained in (i) above, that $(k_i)^*$ is a homotopy equivalence.
\end{proof}

There is a canonical transformation of bivariant \addws{space-valued theories 
$$\mathcal{H}: B \calC \to B \calC_{\bullet},$$
i.e., a collection of continuous maps $B\calC(\theta)\to B \calC(\theta)_{\bullet}$ compatible} 
with both the push-forward and the pull-back operations, which is given by the inclusion of the $0$-skeleta. 

This bivariant transformation 
is universal among transformations from the bivariant theory $B\calC$ to a homotopy invariant bivariant theory: if $F$ is a bivariant theory which is homotopy invariant, 
and $\mathcal{F} \colon B\calC\to F$ is a bivariant transformation, then we obtain a natural commutative diagram of bivariant transformations
\begin{equation} \label{homotopification}
\xymatrix{
 B\calC \ar[d]^{\mathcal{H}} \ar[rr]^{\mathcal{F}} && F \ar[d]^\simeq\\
 B\calC_\bullet  \ar[rr]^{\vert \mathcal{F}_\bullet \vert} && \vert  F_\bullet\vert
}
\end{equation}
where the vertical maps are the inclusions of the $0$-skeleta to the fat geometric realizations. Since the theory $F$ is homotopy invariant, it follows 
that the right vertical map is a natural weak equivalence. The canonicity of this factorization also implies that the factorization of $\mathcal{F}$ through 
$\mathcal{H}$ is unique when regarded in the appropriate homotopy category. 



\medskip 

The bivariant theory $B\calC_\bullet$ actually takes values in the category of infinite loop spaces. This can be seen by noticing that the partial monoidal structure given 
by union of subsets (whenever this is well-defined) gives rise to the structure of a (special) $\Gamma$-space in the sense of Segal \cite{Segal(1974)}. 
Moreover, it is easy to see that it is also group-like. Following \cite{Nguyen(2015)}, the $\Gamma$-space structure can be described more precisely by varying the tangential $\theta$-structure: the value of the $\Gamma$-space at the pointed set $n_+$ is 
$$B \calC(\theta(n_+))_{\bullet}$$
where the parametrized tangential structure is the projection 
$$\theta(n_+):  \coprod_n X \xrightarrow{\coprod_n \theta} BO(d) \times B.$$
To see that this satisfies the Segal condition, one compares the spaces of (unparametrized) embeddings that define the spaces of objects and of morphisms in the respective simplicial 
categories 
$$\calC(\theta(n_+))_{\bullet} \ \ \text{and} \ \ \prod_n \calC(\theta)_{\bullet}.$$ 
It is easy to see that in both cases these spaces define homotopy equivalent models for the same classifying object or, alternatively, one can show directly that fiberwise 
embeddings of bundles over $B$ into $B \times \RR^{d + \infty}$ can be assumed to be disjoint, canonically up to homotopy (cf. the proof of \cite[Proposition 1]{Nguyen(2015)}). 
Proposition \ref{lem:homotopy_invariance} and the naturality of the $\Gamma$-space structure show that $B \calC_{\bullet}$ is homotopy invariant also as bivariant theory with 
values in $\Gamma$-spaces. 

\medskip 

The construction of $\calC(\theta)$ may be varied so that we allow \addws{$\theta$-structured fiber bundles of smooth compact manifolds with boundary} (embedded in \addws{ $\RR_+ \times \RR^{d-2} \times \RR^\infty$}) as objects, 
and \addws{$\theta$-structured bundles of smooth} cobordisms between these as morphisms (cf. \cite{Genauer(2008)}). We denote the corresponding \addws{\emph{parametrized $\theta$-cobordism category}} by $\calC^\dell(\theta)_{\bullet}$ and \addws{its classifying space by $B \calC^\dell(\theta)_{\bullet}$.}

\medskip

We do not know the homotopy types of $B\calC(\theta)_{\bullet}$ or $B\calC^\dell(\theta)_{\bullet}$ in general; in fact, for our purposes, it will suffice to use 
only their formal properties.

\section{The mapping property}\label{sec:mapping_property}

In this section, we will discuss a mapping property of the bivariant theory $B \calC^{\dell}_{\bullet}$ which greatly simplifies the identification of bivariant transformations out of 
this theory. This is the main result towards the proof of the Index Theorem in Section \ref{proof-of-DWW}.  

For the remainder of the paper, we will restrict our attention to bivariant theories which take values in the category of spectra. Such a bivariant theory $F$ determines for each 
fibration $\theta\colon X \to BO(d) \times B$,  

\

(i) a contravariant functor $\underline F_{\theta}$ on spaces over $B$, defined by 
\[\underline F_\theta(f): = F((\mathrm{id} \times f)^* \theta),\]

\

(ii) a covariant functor $\overline F$ on fibrations over $BO(d)$, defined by
\[ \overline F(X \to BO(d)):= F(X\to BO(d)).\]

\

\begin{rem} 
If $F$ is homotopy invariant, then clearly the functors $\underline F_{\theta}$ and $\overline F$ are homotopy invariant. 
\end{rem}

\begin{defn}
A homotopy invariant bivariant theory $F$ is called \emph{strongly excisive} if the functor $\underline F_\theta$ is strongly 
excisive for every $\theta$.
\end{defn}

We recall that $\underline F_\theta$ is called \emph{strongly excisive} if it sends homotopy colimits to homotopy limits. It is well-known that 
such a functor gives rise to a cohomology theory on spaces over $B$ with $B$-twisted coefficients given by the values of $\underline F_{\theta}$
at $\theta_b: X_b \to BO(d)$ for each $b \in B$. 

\medskip

Our main goal is to classify bivariant transformations 
\begin{equation} \label{biv_tr_cob_F}
\mathcal F\colon \Omega B\calC^\dell_\bullet \to F
\end{equation}
into bivariant theories $F$ which are strongly excisive. Such a bivariant transformation gives rise to characteristic classes of $\theta$-manifold bundles as follows.
Let $p\colon E\to B$ be a bundle of smooth compact $d$-manifolds (possibly with boundary), with a fiberwise embedding $E \subseteq B \times \RR^{\infty}$, and 
equipped with a parametrized $\theta$-structure $l\colon E\to X$ for some 
fibration $\theta\colon X\to BO(d)\times B$. \addws{Then $p$ can be regarded as a parametrized $\theta$-cobordism from $\emptyset$ to $\emptyset$ in $\calC^{\dell}(\theta)$. This defines a loop in $B\calC^\dell(\theta)_\bullet$, denoted $(p,l)$, and therefore an element in $\pi_0 \Omega B\calC^\dell(\theta)_\bullet$}. Applying $\mathcal F(\theta)$, we obtain an element
\[\mathcal F[p,l]:= [\mathcal{F}(\theta)(p, l)] \in \pi_0 F(\theta).\]
\addws{This is a characteristic class for the pair $(p,l)$ with values in the parametrized cohomology theory associated with $\underline F_{\theta}$; that is, it is natural with respect to pull-backs.} Taking the 2-skeleton of $B\calC^\dell(\theta)$ into account, one can 
show that the characteristic class $\mathcal F[p,l]$ is additive for fiberwise codimension-1-splittings. Similarly, the higher dimensional skeleta of $B\calC^\dell(\theta)_\bullet$ 
show the higher coherence of the additivity property in case of several independent codimension-1-splittings. Thus, 
the rule $(p, l) \mapsto \mathcal{F}[p,l]$ may be regarded as a \emph{coherently additive} characteristic 
class for $\theta$-manifold bundles. We will prove that a bivariant transformation \eqref{biv_tr_cob_F} is determined by the associated characteristic class evaluated at a 
specific bundle of $d$-disks. Denote by 
\[\theta^\univ\colon BO(d)^I\to BO(d)\times BO(d)\]
the evaluation at the endpoints and by
\[p_{\mathscr{D}}\colon E_{\mathscr{D}} \to BO(d)\]
the disk bundle of the universal $d$-plane bundle over $BO(d)$. We choose a fiberwise neat embedding $E_{\mathscr{D}} \subseteq BO(d) \times (0,1) \times \RR_+ \times \RR^{d-2 + \infty}$ 
\addws{over $BO(d)$} (any two are isotopic) 
and a tangential $\theta^\univ$-structure
\[
 \xymatrix{
 &&& BO(d)^I \ar[d]^{\theta^\univ} \\
E_{\mathscr{D}} \ar[rrr]^(.45){(T^vE, p_{\mathscr{D}})} \ar[rrru]^{l^{\univ}} &&& BO(d) \times BO(d)
}
\]
(any two are homotopic). These choices specify a morphism in $\calC^{\dell}(\theta^{\univ})$ (from $\varnothing$ to $\varnothing$) and this defines a loop in 
$B \calC^{\dell}(\theta^{\univ})_{\bullet}$, which we denote by $D^d_\univ$.

The classification we are aiming at will be formulated in terms of the \emph{homotopy category} of bivariant theories. Let $\mathscr{B}$ denote the category of spectrum-valued bivariant 
theories and bivariant transformations. A bivariant transformation $\mathcal F$ is a \emph{weak equivalence} if each $\mathcal F(\theta)$ is a weak equivalence (of spectra) for each 
$\theta$. The homotopy category $\mathscr{H}o(\mathscr{B})$ is obtained by formally inverting the \emph{weak equivalences} \footnote{Both the definition of the bivariant category and of the 
homotopy category may require the passage to a larger universe, see e.g.~\cite{DHKS}.}. We denote by $[F,G]$ the morphisms from 
$F$ to $G$ in this homotopy category.

\begin{thm}[Mapping property]\label{thm:mapping_property}
Let $F$ be a strongly excisive bivariant theory  with values in spectra. There is a bijection 
\begin{align*}
[\Omega B\calC^\dell_\bullet, F] &\to \pi_0 F(\theta^\univ),\\
\mathcal F & \mapsto \mathcal F[D^d_\univ].
\end{align*}
\end{thm}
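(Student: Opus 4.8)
The plan is to identify $[\Omega B\calC^\dell_\bullet, F]$ with a homotopy-theoretic mapping space computation by exploiting three ingredients: the universality of the homotopification $\mathcal{H}$, the description of bivariant transformations into a strongly excisive theory via a coassembly map, and Genauer's theorem on the cobordism category of manifolds with boundary. First I would reduce to homotopy-invariant theories. Since $F$ is homotopy invariant by hypothesis, the diagram \eqref{homotopification} shows that a bivariant transformation $\Omega B\calC \to F$ factors through $\Omega B\calC_\bullet$ uniquely in $\mathscr{H}o(\mathscr B)$; this lets us work with $\Omega B\calC^\dell_\bullet$ throughout. The next step is the crucial structural input, promised at the start of Section 4: a bivariant transformation into a \emph{strongly excisive} $F$ is the same as one that factors through the \emph{coassembly map} of $B\calC^\dell_\bullet$, because strong excisiveness means $\underline F_\theta$ turns homotopy colimits into homotopy limits, so a transformation out of $B\calC^\dell_\bullet$ is determined by — and freely produced from — a transformation out of its coassembled (local-coefficient) version. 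Concretely, I expect the coassembly map of $B\calC^\dell_\bullet$ to be identified, via a parametrized Pontryagin--Thom construction, with a section space of a bundle whose fiber is $\Omega$ of the (unparametrized, fiberwise) cobordism category $B\calC^\dell(\theta_b)$.

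With this in hand, the computation becomes: bivariant transformations $\Omega B\calC^\dell_\bullet \to F$ in the homotopy category correspond to natural transformations from the coassembled theory to $F$, and by the (co)Yoneda-type universal property of representable/local-coefficient functors these are classified by a single spectrum map — the value on the universal object. Here is where Genauer's theorem enters: it identifies the homotopy type of $B\calC^\dell(\theta)$ (for manifolds with boundary) in a way that exhibits the relevant homotopy colimit presentation as being \emph{generated} by the single cell corresponding to the universal disk bundle. Precisely, Genauer's result says that the cobordism category of manifolds with boundary, after looping, is the Thom spectrum/structure space built from $BO(d)$ in such a way that $\Omega B\calC^\dell(\theta^\univ)_\bullet$ (with the universal tangential structure $\theta^\univ \colon BO(d)^I \to BO(d)\times BO(d)$) carries a \emph{universal element} $D^d_\univ$ from which every other loop $(p,l)$ is obtained by a bivariant pull-back/push-forward operation, up to coherent homotopy. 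Thus evaluation at $D^d_\univ$ gives a well-defined map $[\Omega B\calC^\dell_\bullet, F] \to \pi_0 F(\theta^\univ)$, and conversely any element of $\pi_0 F(\theta^\univ)$ — equivalently, any spectrum map out of the value of the coassembled theory at $\theta^\univ$ — extends uniquely over the homotopy colimit presentation to a bivariant transformation, by strong excisiveness of $F$. Injectivity and surjectivity of the evaluation map then follow from the universal property: two transformations agreeing on $D^d_\univ$ agree on the universal (co)assembled object, hence everywhere after coassembly, hence (again by strong excisiveness, which makes the coassembly map into $F$ an equivalence) as bivariant transformations in $\mathscr{H}o(\mathscr B)$.

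The main obstacle will be the second step: making precise the claim that, for a strongly excisive target, a bivariant transformation out of $B\calC^\dell_\bullet$ is equivalent data to a transformation out of its coassembly, and that the latter is freely generated (coherently) by the value at $\theta^\univ$. This requires (a) setting up the coassembly construction for bivariant theories and checking it is a $\mathscr{H}o(\mathscr B)$-equivalence onto the strongly excisive part, (b) the parametrized Pontryagin--Thom identification of the coassembly of $B\calC^\dell_\bullet$, and (c) invoking Genauer's computation to see that this coassembled object is, as a homotopy colimit over the relevant indexing category of $\theta$-disk bundles, actually determined by the universal disk $D^d_\univ$ (the point being that a manifold with boundary is, tangentially and up to cobordism, built from its boundary collar — a disk bundle — so the cobordism category with boundary "collapses" to data at $BO(d)^I$). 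Steps (a)–(c) are presumably carried out in Section 4; granting them, the classification is a formal consequence of the universal property of homotopy colimits against the homotopy-limit-preserving functor $\underline F_\theta$, and I would finish by spelling out that the bijection is exactly evaluation at $D^d_\univ$.
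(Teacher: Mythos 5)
Your proof plan follows essentially the same route as the paper: reduce to the covariant part over $BO(d)$ via the coassembly formalism (Proposition \ref{lem:maps_into_excisive_theory}), identify that covariant part via the parametrized Pontryagin--Thom construction and Genauer's theorem, and conclude by a universality argument centered on $D^d_\univ$. However, two of the middle steps are stated imprecisely enough that, as written, they would not close the argument.

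First, a small but relevant factual slip: Genauer's theorem for the cobordism category \emph{with boundary} does not produce a Thom spectrum. The Madsen--Tillmann spectrum $\mathrm{MT}\theta$ arises for closed manifolds; with boundary, the answer is that $\Omega B\calC^\dell(\theta)$, for $\theta\colon X\to BO(d)$ over a point, is weakly equivalent to $Q(X_+)=\Omega^\infty\Sigma^\infty X_+$. This untwisting is exactly what makes the rest of the argument formal, so it is worth getting right.

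Second, the phrase ``by the (co)Yoneda-type universal property of representable/local-coefficient functors these are classified by a single spectrum map --- the value on the universal object'' papers over the actual chain of reductions. After Genauer one is looking at natural transformations of covariant functors on spaces over $BO(d)$,
$$Q(-_+)\ \longrightarrow\ \overline F,$$
and the key point is the free--forgetful adjunction for infinite loop spaces: such a transformation is the same, in the homotopy category, as a natural transformation of space-valued functors $I\to\overline F$ where $I(X\to BO(d))=X$. The functor $I$ is strongly excisive covariantly, so it is the homotopy colimit over $\simp BO(d)$ of contractible pieces $\sigma^{\fib}$; hence a natural transformation $I\to\overline F$ is a compatible family of maps $\sigma^{\fib}\to F(\sigma^{\fib})$, i.e.\ a point in $\holim_{\sigma}F(\sigma^{\fib})$ (after using contractibility of the sources). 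That homotopy limit is by definition the value of the coassembled $F$ at $\theta^\univ$, which by strong excisiveness of $F$ is $\pi_0 F(\theta^\univ)$. Your proposal jumps from ``transformations out of the coassembled cobordism theory'' directly to ``value at the universal disk,'' replacing this chain (free adjunction, excisiveness of $I$, $\holim$ over the simplex category, strong excisiveness of $F$) by an appeal to the coassembled cobordism theory being ``freely generated'' by $D^d_\univ$. That is not what is proved, and I do not see how it would be proved directly; the paper's argument never presents the cobordism side as generated by a cell. It presents the \emph{target} side via an excisiveness/homotopy-limit argument, and then identifies the resulting element with $\mathcal F[D^d_\univ]$ in a separate commutative-diagram step. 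With those corrections, your outline matches the actual proof.
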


Thus, a bivariant transformation out of $\Omega B\calC^\dell_\bullet$ is uniquely determined, in a homotopical sense, by a single characteristic class 
$\mathcal F[D^d_\univ]$. The proof of this theorem is given in Section \ref{coassembly}. In Section \ref{proof-of-DWW}, we deduce \addws{our main result (Theorem \ref{thm:factorization_result})} from 
Theorem \ref{thm:mapping_property}.

\begin{rem}\label{rem:space_level_mapping_property}
Actually $[\Omega B\calC_\bullet^\dell, F]$ is the set of path components of a \emph{space} of bivariant transformations $\map(\Omega B\calC_\bullet^\dell, F)$, defined through the 
hammock localization. Working more carefully (see Remark \ref{rem:stronger_mapp_prop}), our arguments will prove a space-valued strengthening of Theorem 3.3, namely, 
a weak equivalence of spaces
\begin{equation}\label{eq:space_level_map}
\map(\Omega B\calC_\bullet^\dell, F)\simeq \Omega^\infty F(\theta^\univ). 
\end{equation}
Explicitly, the map from the left to the right in \eqref{eq:space_level_map} is given by evaluation of a bivariant transformation at 
$\theta^\univ$, and then precomposition with a choice of a spectrum map $S^0\to F(\theta^\univ)$ corresponding to $\mathcal F[D^d_\univ]\in \pi_0 F(\theta^\univ)$. 
This procedure yields an element in the mapping space $\map(S^0, F(\theta^\univ))$ (of the hammock localization of the category of spectra); this mapping space is weakly 
equivalent to the right-hand side of \eqref{eq:space_level_map}.
\end{rem}

\section{Coassembly} \label{coassembly}

\subsection{Recollections} We start by recalling the concept of coassembly from \cite{Williams(2000)}. We assume that all spaces have the homotopy type of a CW complex. Let 
$F$ be a contravariant spectrum-valued (or space-valued) functor on the category of spaces over $B$. Suppose that $F$ is homotopy invariant, i.e., $F$ sends homotopy equivalences (of underlying spaces) to weak equivalences. 
Dualizing the arguments for the construction of assembly in \cite{WeWi(1995)}, one may associate functorially to such a functor $F$ a new 
functor $F^\&$, defined on spaces over $B$, which is strongly excisive.
The functor $F^\&$ comes with a natural transformation 
\[\nabla_F\colon F\to F^\&,\] 
called \emph{coassembly map}, which is a homotopy equivalence on one-point spaces
\[b\colon \{*\}\to B.\]
One can argue, following \cite{WeWi(1995)}, that these properties already characterize $F^\&$ and $\nabla_F$ up 
to a (canonical) natural weak equivalence of functors. 

Strongly excisive functors are determined 
by their restrictions to contractible spaces over $B$, i.e., those spaces which are homotopy equivalent to one-point spaces, 
because every space $f: X \to B$ is a canonical homotopy colimit of contractible spaces over $B$ given by singular simplices. This canonical homotopy colimit can be used to give an explicit model for the coassembly. In particular, the coassembly map is a natural weak equivalence if $F$ is already strongly excisive. More generally, it is the universal approximation to $F$ by a functor which is strongly excisive.

\begin{rem}\label{rem:form_of_excisive_functor}
A strongly excisive functor $F$ is naturally weakly equivalent (via a zigzag of natural transformations) to the functor
\[(f\colon X\to B) \mapsto \sections{F_B(X)}{B}\]
that sends $f: X \to B$ to the spectrum (or space) of sections of an associated fibration over 
$B$ which is obtained by applying $F$ to each (homotopy) fiber of $f$. See \cite[I.1]{Dwyer-Weiss-Williams(2003)}.
\end{rem}

\subsection{Coassembly for bivariant theories} 
In Section \ref{sec:mapping_property}, we defined strongly excisive bivariant theories. We will extend the definition of coassembly to bivariant theories and show that 
a strongly excisive bivariant theory $F$ is determined by the underlying covariant functor $\overline F$ on the category of fibrations over $BO(d)$.

\begin{defn}
A bivariant transformation $F\to G$  of homotopy invariant bivariant theories is called a \emph{bivariant coassembly map} if $G$ is strongly excisive 
 and $\overline F\to \overline G$ is a weak equivalence of covariant functors.
\end{defn}

Just as before, if $F$ is strongly excisive, then any bivariant coassembly map $F\to G$ is necessarily a weak equivalence of bivariant theories. It is not hard to see that 
any bivariant theory admits a bivariant coassembly map. Indeed, the standard construction of coassembly provides, by naturality, a bivariant natural transformation. However, 
we prefer to give a \addws{slightly different model for the bivariant coassembly map 
$$\nabla_F\colon F\to F^\&.$$ 
This model will be used to prove the following proposition.} 

\begin{prop}\label{lem:maps_into_excisive_theory}
Let  $F$ and $G$ be homotopy invariant bivariant theories and suppose that $G$ is also strongly excisive. Then the functor $F\mapsto \overline F$ induces a bijection of morphism sets
\[[F,G] \to [\overline F, \overline G]\]
in the respective homotopy categories (of bivariant theories $\mathscr{H}o(\mathscr{B})$ and 
of functors on the category of fibrations over $BO(d)$, respectively, obtained by formally inverting the weak equivalences). 
\end{prop}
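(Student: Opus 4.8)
The plan is to construct the bivariant coassembly map $\nabla_F\colon F\to F^{\&}$ with enough naturality that it represents the universal strongly-excisive approximation at the level of bivariant theories, and then to leverage its two defining features — (a) $F^{\&}$ is strongly excisive, and (b) $\overline F\to\overline{F^{\&}}$ is a weak equivalence — to see that $\nabla_F$ is initial among maps into strongly excisive theories. Concretely, I would first recall from the preceding subsection the explicit model for $F^{\&}$: since a strongly excisive functor is determined by its restriction to contractible spaces over $B$, one sets $\underline{F^{\&}_{\theta}}(f\colon X\to B)$ to be the homotopy limit over the category of simplices of $X$ of the values of $\underline F_{\theta}$ on those simplices (equivalently, the sections of the fibration over $B$ obtained by applying $\underline F_\theta$ fiberwise, cf.\ Remark \ref{rem:form_of_excisive_functor}). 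The key point is that this construction only uses the contravariant structure over $B$ and is manifestly natural in the covariant variable over $BO(d)$, so it upgrades to a bivariant theory $F^{\&}$ together with a bivariant transformation $\nabla_F$; moreover $\overline{F^{\&}}$ is computed by evaluating at the one-point space over $B=\ast$, where $\nabla_F$ is an equivalence by construction, so $\nabla_F$ is a bivariant coassembly map. Its naturality in $F$ must be checked, but this is formal from the functoriality of the homotopy-limit model.

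Granting this, the bijection $[F,G]\to[\overline F,\overline G]$ is proved in two halves. For surjectivity: given a map $\overline F\to\overline G$ of covariant functors, I want to lift it to a bivariant transformation. Apply $(-)^{\&}$ to get $F^{\&}\to G^{\&}$; since $G$ is already strongly excisive, $\nabla_G\colon G\to G^{\&}$ is a weak equivalence, so one obtains a zigzag $F\xrightarrow{\nabla_F}F^{\&}\to G^{\&}\xleftarrow{\sim}G$ representing a morphism in $\mathscr{H}o(\mathscr{B})$, and one checks that on underlying covariant functors this recovers (up to the canonical equivalences) the given map $\overline F\to\overline G$ — using again that $\overline{\nabla_F}$ and $\overline{\nabla_G}$ are equivalences. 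For injectivity: suppose two bivariant transformations $\Phi_0,\Phi_1\colon F\to G$ (in the homotopy category) become equal on $\overline{(-)}$. Using $\nabla_G$ a weak equivalence, replace the target by $G^{\&}$; then $\Phi_i$ factors through $\nabla_F$ because $F^{\&}\to G^{\&}$ is determined by the restriction to contractible spaces over $B$, i.e.\ by the covariant data, on which $\Phi_0$ and $\Phi_1$ agree. Thus $\Phi_0\simeq\Phi_1$ in $[F,G]$. The formal engine here is exactly the one behind diagram \eqref{homotopification} and the uniqueness-of-factorization statement following it, transported from the homotopy-invariance situation to the strong-excisiveness situation.

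The main obstacle I anticipate is not any single homotopy-theoretic input but the bookkeeping needed to make "$F^{\&}$ is determined by $\overline F$" into a precise statement about morphisms in the homotopy category $\mathscr{H}o(\mathscr{B})$, which is obtained by a (possibly large) localization. One has to be careful that the coassembly model is genuinely functorial in $F$ on the nose (not merely up to coherent homotopy), so that $(-)^{\&}$ descends to the homotopy category and so that the zigzags above compose correctly; this is where choosing a rigid point-set model — homotopy limits over simplex categories, with a fixed functorial choice — rather than an abstract characterization pays off, and it is presumably the reason the authors announce they will "give a slightly different model." A secondary technical point is verifying that $\overline{F^{\&}}\simeq\overline F$ respects the covariant functoriality over $BO(d)$, i.e.\ that evaluating the section-space model at $B=\ast$ really returns $\overline F$ naturally; this reduces to the observation that over a point the homotopy limit over the simplex category of $\ast$ is just the value at $\ast$, but it should be spelled out. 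Once these functoriality issues are pinned down, the bijection follows formally from the two-out-of-three property of weak equivalences and the universal property of $\nabla_F$, exactly as in the homotopy-invariant analogue already established in the paper.
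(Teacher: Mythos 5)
Your proof proposal follows the same strategy as the paper's: construct the bivariant coassembly $\nabla_F\colon F\to F^{\&}$ so that (up to equivalence) $F^{\&}$ depends only on $\overline F$, then produce the inverse to $\mathcal{F}\mapsto\overline{\mathcal{F}}$ via a zigzag $F\xrightarrow{\nabla_F}F^{\&}\to G^{\&}\xleftarrow{\simeq}G$. One point deserves sharpening. The model you describe --- a homotopy limit over a simplex category of the values $\underline F_\theta(\sigma)=F((\id\times\sigma)^{*}\theta)$ --- is the ``standard'' coassembly and depends on the full bivariant $F$: each $(\id\times\sigma)^{*}\theta$ is a fibration over $BO(d)\times\Delta^{n}$, not over $BO(d)$, so $F$ and not merely $\overline F$ is being fed into the limit. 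Consequently the assignment $\overline F\mapsto F^{\&}$ is not literally well-posed with that model, and the step ``apply $(-)^{\&}$ to a map $\overline F\to\overline G$'' in your surjectivity argument cannot be taken at face value. The paper's fix is exactly what you conjecture in your final paragraph: it introduces a smaller model $\widetilde{F}^{\&}(\theta)=\holim_{\sigma\in\simp B}\overline F(\sigma^{*}X\to BO(d))$ whose definition uses \emph{only} $\overline F$, together with a natural weak equivalence $\widetilde{F}^{\&}\xrightarrow{\simeq}F^{\&}$, the latter being the simplicial thickening that actually receives the coassembly map $\nabla_F\colon F\to F^{\&}$. A map $\overline F\to\overline G$ then tautologically induces $\widetilde{F}^{\&}\to\widetilde{G}^{\&}$, and the zigzag in the paper runs $F\to F^{\&}\leftarrow\widetilde{F}^{\&}\to\widetilde{G}^{\&}\to G^{\&}\leftarrow G$. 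This also replaces your somewhat informal injectivity claim that ``$\Phi_i$ factors through $\nabla_F$'' by the cleaner naturality square $\nabla_G\circ\Phi=\widetilde{\Phi}^{\&}\circ\nabla_F$ together with invertibility of $\nabla_G$. In short: same route, and you correctly identify the functoriality issue, but the fix is slightly more than a rigid choice of homotopy limit --- it requires a model whose output is \emph{literally} a function of $\overline F$ alone, and the resulting zigzag has one more intermediate stage than yours.
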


The proposition says that any natural transformation $\overline F\to \overline G$ extends uniquely, in the homotopy category, to a  bivariant transformation $F\to G$. As a consequence, 
given $F$ and $G$ as in Proposition \ref{lem:maps_into_excisive_theory}, the bivariant coassembly map $\nabla_F$ induces a bijection of morphism sets in the homotopy category of bivariant
theories 
\begin{equation} \label{univ_prop_of_coassembly}
\nabla_F^*: [F^\&, G] \xrightarrow{\cong} [F, G].
\end{equation}
In other words, a bivariant transformation $F \to G$ admits an extension $F^{\&} \to G$, uniquely in the homotopy category of bivariant theories. 

\medskip 

We come to the promised definition of a bivariant coassembly map. We first describe the target $F^\&$. \addws{This will be defined in two steps.} Let $\simp (B)$ be the simplex 
category where an object is a continuous map $\sigma\colon  \Delta^n\to B$, and a monotone map $\alpha\colon [m]\to [n]$ provides a morphism $\alpha^*\sigma\to \sigma$. \addws{We 
define $\widetilde{F}^{\&}$} applied to a fibration $\theta \colon X\to BO(d) \times B$ to be the homotopy limit of the functor on the simplex category of $B$, 
\[\sigma\mapsto \overline F(\sigma^* X),\]
where $\sigma^*X$ denotes the pull-back of $\theta$ along
\[\id\times\sigma\colon BO(d)\times \Delta^n\to BO(d)\times B, \]
followed by the projection to $BO(d)$. Here we use the classical Bousfield-Kan model for the homotopy limit, that is, the (infinite loop) space of natural transformations (in $\sigma\in\simp (B)$)
\[
B(\simp B/\sigma) \to  \overline F(\sigma^* X)
\]
where we implicitly replace $F$ by an equivalent infinite loop space valued functor if necessary. (Recall that the space of  natural transformations between two space valued functors $H$ and $K$ on a category $\mathcal I$ is defined as the equalizer of the canonical diagram of mapping spaces
\[\prod_{i\in \mathcal I} \map(H(i), K(i)) \rightrightarrows \prod_{i\to j\in \mathcal I} \map(H(i), K(j));\]
the homotopy limit defined in this way inherits the structure of an infinite loop space if $K$ takes values in infinite loop spaces.
We refer the reader to \cite{Hi} for 
a detailed account of the properties of this construction in general (simplicial) model categories.)

Then we define $F^{\&}$ to be a simplicial thickening of the bivariant theory $\widetilde{F}^{\&}$ defined by
$$F^{\&}\fibr{X}{BO(d) \times B} \colon = \holim_{\simp(B)}  \bigg\vert F\fibr{\sigma^* X \times \Delta^{\bullet}}{BO(d) \times \Delta^{\bullet}}\bigg\vert$$
where $|-|$ denotes the (fat) geometric realization of a simplicial spectrum (or space). Since $\widetilde{F}^{\&}$ is already homotopy invariant, the canonical transformation $\widetilde{F}^{\&} \to F^{\&}$ is a weak equivalence of bivariant theories.

The bivariant transformation $F\to F^\&$ is constructed as follows. 
It is enough to construct a simplicial natural transformation
\[F\fibr{X}{BO(d) \times B} \times N_\bullet(\simp B/\sigma) \to F\fibr{\sigma^* X\times \Delta^\bullet}{BO(d) \times \Delta^\bullet}.\]

\noindent The image of a $k$-simplex in the nerve of $\simp B/\sigma$, that is, a sequence of maps
\[\Delta^{n_0}\to\dots \to \Delta^{n_k}\to \Delta^m\xrightarrow{\sigma} B,\]
is given as follows. Consider the last vertices of the $\Delta^{n_i}$'s in $\Delta^m$ to obtain a monotone map $[k]\to [m]$, and hence a simplicial map $\Delta^k\to \Delta^m$. 
We obtain a composite map
\[F\fibr{X}{BO(d) \times B}\to F\fibr{X\times_{BO(d) \times B} BO(d)\times \Delta^k}{BO(d) \times \Delta^k} \to F\fibr{\sigma^* X\times \Delta^k}{BO(d) \times \Delta^k},\]
using the  contravariant operation for the first map, and the covariant operation (include $X\times_{BO(d) \times B} BO(d)\times \Delta^k$ into $\sigma^*X \times \Delta^k$) for the second map. 

This provides a bivariant transformation $F\to F^\&$. \addws{Following \cite[I.1]{Dwyer-Weiss-Williams(2003)}, it can be shown that $F^\&$ is strongly excisive, and clearly 
$\overline F\to \overline{F^\&}$ is a weak equivalence.}

\begin{proof}[Proof  of Proposition \ref{lem:maps_into_excisive_theory}] 
\addws{A natural transformation $\mathcal{F}\colon \overline F\to \overline G$ induces a bivariant transformation $\widetilde{\mathcal{F}}^\&\colon \widetilde{F}^\&\to \widetilde{G}^\&$, 
since the definition of $\widetilde{F}^\&$ is natural and only depends on $\overline F$. 
We obtain a natural zigzag of bivariant transformations
\[
\xymatrix{
F \ar[r]^{\nabla_F} & F^\& \ar@{-->}[rr]^{\mathcal{F}^\&} && G^\& & G \ar[l]^{\simeq}_{\nabla_G} \\
& \widetilde{F}^{\&} \ar[u]^{\simeq} \ar[rr]^{\widetilde{\mathcal{F}}^{\&}} && \widetilde{G}^{\&} \ar[u]^{\simeq} 
}
\]
where the last map is a weak equivalence because $G$ is strongly excisive by assumption. It is not hard to see that 
this construction defines an inverse map $[\overline F, \overline G]\to  [F,G]$.}
\end{proof}

\begin{rem} \label{rem:stronger_univ_coassembly}
The construction of the coassembly associates a strongly excisive bivariant theory $G^\&$ to every homotopy invariant functor $G$ on the category of fibrations over $BO(d)$. 
Proposition \ref{lem:maps_into_excisive_theory} shows that there is a homotopy adjunction defined by the functors $F \mapsto \overline{F}$ and $G \mapsto G^{\&}$ on the 
respective categories with weak equivalences. Note that the coassembly transformation can be identified with the unit transformation of this adjunction. The canonicity of the 
zigzags in the arguments above shows that the bijection of Proposition \ref{lem:maps_into_excisive_theory} extends to a weak equivalence between the mapping spaces in the 
respective hammock localizations. 
\end{rem}

%

\subsection{The Pontryagin-Thom collapse maps} 
A concrete example of a coassembly map in our context is the \emph{parametrized Pontryagin-Thom transformation} defined on the bivariant theory $B\calC^\dell_\bullet$. For $B$ a CW complex,
this is given by natural infinite loop space maps
\begin{equation}\label{eq:transfer_map}
 \tr_{\theta} \colon B\calC^\dell(\theta)_{\bullet} \to \Omega^{\infty-1}_B \Sigma^\infty_B(X_+)
\end{equation}
where $\theta\colon X\to BO(d) \times B$ is a fibration, $\Omega^n_B$ denotes the functor of fiberwise based maps out of $B\times S^n$, $\Sigma ^n_B$ denotes the fiberwise smash product 
with $S^n$ over $B$, and $X_+:=X\amalg B$ is viewed as a fibration over $B$. The right-hand side of \eqref{eq:transfer_map} is the parametrized infinite loop space associated with the composite map 
$X \to BO(d) \times B \xrightarrow{\mathrm{proj}} B$. Since this can be identified with a space of sections over $B$, it defines a strongly excisive bivariant theory and therefore, 
for the construction of the bivariant transformation \eqref{eq:transfer_map}, it suffices to construct a bivariant transformation between the covariant parts 
$$\overline{\tr}_{\theta} \colon \overline{B \calC^{\dell}(\theta)_{\bullet}} \to \Omega^{\infty - 1} \Sigma^{\infty}(X_+).$$
Then the extension of this transformation to the parametrized setting yields the associated bivariant transformation between the corresponding strongly excisive theories. The natural 
transformation $\overline{\tr}_{\theta}$ is obtained from the weak equivalence shown by Genauer \cite{Genauer(2008)} that identified the homotopy type of the cobordism category of manifolds with boundary (see also 
\cite[6.1]{Raptis-Steimle(2014)}). This weak equivalence can be defined in terms of Pontryagin-Thom collapse maps (see also the variation used in \cite[5.3]{Raptis-Steimle(2014)}). 
We will make use of the fact that, by Atiyah duality, the Pontryagin-Thom construction also provides a model for the transfer map, see, e.g., \cite[I.5]{Dwyer-Weiss-Williams(2003)}, \cite{Becker-Gottlieb(1976)}, \cite[3.12]{Crabb-James}. Then the 
extension of $\overline{\tr}_{\theta}$ to the corresponding strongly excisive bivariant theories is the weak equivalence of bivariant theories which, by definition, is given by 
parametrized transfer maps. 

\medskip

For the sake of completeness, we will give a more direct definition of $\eqref{eq:transfer_map}$ as a (zigzag of) natural transformation(s) of bivariant theories with values in $\Gamma$-spaces.  
This is a straightforward generalization of the classical Pontryagin-Thom collapse map to a parametrized setting. 

We first consider the case where $B$ is a compact ENR (e.g.\ a compact CW complex). In this case, we may assume that the objects and morphisms 
of the parametrized $\theta$-cobordism category $\calC^{\dell}(\theta)$ are embedded in $B \times \RR^{d + N}$, for some $N > 0$ which is not part of the structure. 
The category $\calC^{\dell}(\theta)$ contains a subcategory $\calC^{\dell, 1}(\theta)_N$ which consists of those objects whose underlying 
manifold bundles are so that 
\[
E\subset B\times \{a\} \times \RR_+ \times \RR^{d - 2 + N}
\]
has the property that any point in the open 1-neighborhood of any fiber $E_b\subset \RR_+\times \RR^{d-2+N}\subset \RR^{d-1+N}$ has a unique closest point  in $E_b$.
The definition of the morphisms is similar. For such an object $E$, 
there is a continuous projection 
$$\pi\colon E^1\to E$$ 
from the open fiberwise 1-neighborhood of $E$ in $B\times \RR^{d - 1 + N}$ to its closest point in the corresponding fiber of $E$. 

The map $\eqref{eq:transfer_map}$ is induced by a functor to the Moore path category of the space in the target. We recall that given a space $X$, the \textit{Moore path category} 
$\Path(X)$ is a topological category whose space of objects is $X \times \RR$ and a morphism from $(x_1, a_1)$ to $(x_2, a_2)$, $a_1 < a_2$, is given by a continuous map 
$\gamma: \RR \to X$ such that $\gamma(t) = x_1$ for $t \leq a_1$ and $\gamma(t) = x_2$ for $t \geq a_2$. The topologies on the spaces of objects and morphisms 
are induced from the topology on $X$. The inclusion of objects $X \xrightarrow{\sim} B \Path(X)$ is a weak equivalence. 

Then we define a functor
\[\tr_{\theta, N} \colon \calC^{\dell, 1}(\theta)_N \to \Path(\Omega_B^{d-1+ N} \Sigma^{d+ N}_B(X_+))\]
(cf. \cite[\S 1]{GMTW(2009)}) by sending a morphism $(E,p, a_0, a_1, l)$ to the map
\begin{align*}
B \times (S^{d-1+N} \wedge [a_0, a_1]_+) &  \to (D^{d+N} \times X)/{\sim}, \\
\quad (b, x)  & \mapsto
 \begin{cases}
  [x-\pi(x), l\circ \pi(x)], &\mathrm{if}~x\in E^1,\\
  \infty, &\mathrm{otherwise,}
 \end{cases}
\end{align*}
where we identify $S^{d-1+N}\wedge [a_0, a_1]_+$ with the one-point compactification of $\RR^{d-1+N}\times [a_0, a_1]$. 

Taking classifying spaces and using the canonical equivalence $X \simeq B\Path(X)$, we obtain a (zigzag of) bivariant transformations (with the same notation)
$$
\tr_{\theta, N} \colon B\calC^{\dell, 1} (\theta)_N \to \Omega_B^{d-1+ N} \Sigma^{d+ N}_B(X_+). 
$$
These maps are compatible with stabilization in $N$, up to a canonical homotopy which is natural in $\theta$, so we obtain in the limit an induced bivariant transformation 
\begin{equation} \label{PT_stab}
\tr_{\theta} \colon B\calC^{\dell, 1} (\theta) \to \Omega_B^{\infty-1} \Sigma^{\infty}_B(X_+). 
\end{equation}
Since the target of \eqref{PT_stab} is homotopy invariant, this bivariant transformation extends canonically to the simplicial thickening, \addws{in the sense of a canonical factorization as in Diagram \eqref{homotopification} of Section 2, (again, we use the same notation)}
\begin{equation} \label{PT_htpy_ext} 
\tr_{\theta}: B\calC^{\dell, 1}(\theta)_\bullet \to \Omega_B^{\infty-1} \Sigma^{\infty}_B(X_+). 
\end{equation}
Lastly, the inclusion of simplicial categories
\begin{equation} \label{def_inj_rad} 
\calC^{\dell, 1}(\theta)_{\bullet} \to \calC^{\dell}(\theta)_\bullet
\end{equation}                                                                                                 
induces a homotopy equivalence on classifying spaces by using a deformation retraction that ``stretches'' the embedded bundle. Thus, by combining \eqref{PT_htpy_ext} and \eqref{def_inj_rad}, we obtain the required bivariant 
transformation \eqref{eq:transfer_map}. This is natural in $\theta$ and therefore extends to the respective $\Gamma$-spaces. 

This explicit description of $\tr$ does not immediately apply when $B$ is not compact because in this case manifold bundles 
will not necessarily admit an embedding in $B \times \RR^{d + N}$. But we can extend the transformation by a formal argument as follows. We may assume that $B$ is a 
CW complex and therefore it is the colimit of the diagram of its compact subspaces, $U: I_B \to \mathrm{(compact \ spaces)}$, where $I_B$ denotes the poset of compact 
subspaces of $B$ ordered by inclusion. 
For $K \in \ \mathrm{Ob} I_B$, let $\theta_{|K}$ be the pullback of $\theta$ associated with the inclusion $K \subset B$. Then the contravariant 
functoriality of the parametrized $\theta$-cobordism category yields functors, natural in $K \in I_B$, 
$$\calC^{\dell}(\theta) \to \calC^{\dell}(\theta_{|K}).$$
Thus, passing to the simplicial thickenings and the geometric realizations, we obtain $\tr_{\theta}$ as a natural (zigzag) map
$$
B \calC^{\dell}(\theta)_{\bullet} \to B( \lim_{I_B} \ \calC^{\dell}(\theta_{| K})_{\bullet})  \to \holim_{I_B} B\calC^{\dell}(\theta_{| K})_{\bullet} \to 
\holim_{I_B} \Omega_K^{\infty-1} \Sigma^{\infty}_K(X_{|K}{}_+)
$$
where the last map is obtained from the functors constructed above, the middle map from the canonical map from a limit to a 
homotopy limit, and note that 
$$\Omega_B^{\infty-1} \Sigma^{\infty}_B(X_+) \xrightarrow{\sim} \holim_{I_B} \Omega_K^{\infty-1} \Sigma^{\infty}_K(X_{|K}{}_+)$$
This definition is natural in $B$ and in $\theta$. 

\begin{thm} \label{transfer=coassembly}
The bivariant transformation \eqref{eq:transfer_map} is a coassembly map.
\end{thm}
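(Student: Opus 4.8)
The plan is to verify the two defining conditions of a bivariant coassembly map for the transformation $\tr\colon B\calC^\dell_\bullet \to \Omega^{\infty-1}_B\Sigma^\infty_B(X_+)$ of \eqref{eq:transfer_map}: first, that the target is strongly excisive, and second, that $\tr$ induces a weak equivalence on the underlying covariant functors over $BO(d)$. The first point is essentially immediate from the construction: the target is, by definition, a space of sections over $B$ of the fibration obtained by applying $\Omega^{\infty-1}\Sigma^\infty((-)_+)$ fiberwise to $X \to BO(d) \times B \to B$, and any functor of the form $(f\colon Y \to B) \mapsto \sections{E_B(Y)}{B}$ for a fibration-valued construction $E$ sends homotopy colimits to homotopy limits (cf. Remark \ref{rem:form_of_excisive_functor} and \cite[I.1]{Dwyer-Weiss-Williams(2003)}); it is also manifestly homotopy invariant. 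So the content is in the second point.

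For the second point, I would fix a fibration $\theta\colon X \to BO(d)$ over the one-point space (i.e.\ $B = \ast$) and show that
\[
\overline{\tr}_\theta\colon B\calC^\dell(\theta)_\bullet \longrightarrow \Omega^{\infty-1}\Sigma^\infty(X_+)
\]
is a weak equivalence. This is exactly the content of Genauer's theorem \cite{Genauer(2008)} identifying the homotopy type of the cobordism category of manifolds with boundary: the classifying space of the $\theta$-cobordism category of manifolds with boundary is the infinite loop space of the shifted Thom spectrum, and under the tangential-structure formalism this Thom spectrum is $\Sigma^\infty(X_+)$ (see also \cite[6.1]{Raptis-Steimle(2014)} for this reformulation, and \cite[5.3]{Raptis-Steimle(2014)} for the Pontryagin–Thom description of the map). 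I would therefore argue that the explicit Pontryagin–Thom collapse map $\tr_{\theta,N}$ constructed above — passing through $\calC^{\dell,1}(\theta)_N$, stabilizing in $N$ via \eqref{PT_stab}, extending to the simplicial thickening via \eqref{PT_htpy_ext}, and composing with the equivalence \eqref{def_inj_rad} — agrees, up to coherent homotopy, with the scanning/collapse equivalence of Genauer, so that $\overline{\tr}_\theta$ is a weak equivalence for every $\theta$ over a point. One technical point to check here is that the homotopy type appearing in Genauer's work is insensitive to the simplicial-thickening model we use for the cobordism category, which follows from the homotopy invariance established in Proposition \ref{lem:homotopy_invariance} together with the universal property of $\mathcal{H}$ recorded in Diagram \eqref{homotopification}.

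Finally I would assemble these fiberwise statements into the claim that $\overline{\tr}$ is a weak equivalence of covariant functors on \emph{all} fibrations $X \to BO(d)$, not merely those with contractible base region — but in fact the definition of $\overline{F}$ only records the values $F(X \to BO(d))$ with $B = \ast$, so there is nothing further to do once the case $B = \ast$ is handled: the source and target of $\overline{\tr}$ are evaluated only on fibrations over $BO(d)$ itself, and the displayed weak equivalence $\overline{\tr}_\theta$ is precisely the required statement. Thus $\tr$ has strongly excisive target and induces a weak equivalence on covariant parts, hence is a bivariant coassembly map.

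The main obstacle I anticipate is the careful comparison in the second paragraph: matching the concrete collapse map $\tr_{\theta,N}$ of this paper (with its Moore-path bookkeeping and its neighborhood-of-closest-point construction on the subcategory $\calC^{\dell,1}(\theta)_N$) against the form of the scanning equivalence used by Genauer, and checking that the passage to the limit over $N$ and the deformation retraction \eqref{def_inj_rad} do not disturb the identification. This is a homotopy-coherence argument rather than a genuinely new input, but it is where the real work lies; everything else is formal, given the results already established.
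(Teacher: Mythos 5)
Your proposal is correct and matches the paper's proof essentially verbatim: the paper also notes that the target is clearly strongly excisive (as a section space) and that, when $B$ is a point, $\calC^{\dell}_\bullet$ is a simplicial model for the unparametrized $\theta$-cobordism category of manifolds with boundary so that $\tr_\theta$ is a weak equivalence by Genauer's theorem. The technical comparison you flag in your final paragraph between the explicit collapse-map construction and Genauer's equivalence is implicitly built into the paper's setup (the transformation is first defined via Genauer's result, with the concrete Pontryagin--Thom description given afterwards ``for the sake of completeness''), so the paper does not re-derive it in the proof of the theorem.
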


\begin{proof}
It is clear that the target of \eqref{eq:transfer_map} is strongly excisive as a bivariant theory defined for fibrations of the form $X \to B$. Moreover, when $B$ is a point, and 
$\theta\colon X\to  BO(d) \times \{*\}$ is a fibration,  then $\calC^{\dell}_\bullet$ is a simplicial model for the (unparametrized) $\theta$-cobordism category of manifolds with 
boundaries. In this case, $\tr_{\theta}$ is a weak equivalence by \addws{a theorem of} Genauer \cite{Genauer(2008)}.
\end{proof}

\begin{ex*}
Let $d=0$ and $\theta$ the identity map on $B$. Then there is essentially only one object in $\calC^{\dell}(\theta)$, namely the empty set, and the morphisms are given 
by finite covering spaces over $B$. From covering space theory, it is known that this depends only on the fundamental group of $B$; indeed $\Omega B\calC_\bullet(\mathrm{id}_B)$ is weakly equivalent to the $K$-theory spectrum of the category of finite sets with a $\pi_1(B)$-action, provided $B$ is path-connected. It follows that $\Omega  B\calC_\bullet(\mathrm{id}_{(-)})$ is not excisive and therefore the coassembly map is not a weak equivalence. (Note that $\calC^\dell(\theta)=\calC(\theta)$ in dimension 0.)
\end{ex*}

\subsection{Proof of Theorem \ref{thm:mapping_property}}

By Proposition \ref{lem:maps_into_excisive_theory}, $\mathcal{F}$ is determined uniquely by its values for $B$ a point. In this case, the \addws{parametrized Pontryagin-Thom} collapse map
\[\tr_{\theta} \colon \Omega B\calC^{\dell}_{\bullet}\fibr{X}{BO(d)}\xrightarrow{\simeq} Q(X_+)\]
is a weak equivalence (of infinite loop spaces) by \cite{Genauer(2008)}. Hence $\mathcal{F}$ is determined by the induced transformation of covariant functors 
with values in infinite loop spaces 
$$\overline{\mathcal{F}}\colon Q \to \overline{F}$$
where $Q\colon (X \xrightarrow{\theta} BO(d)) \mapsto Q(X_+)$. The transformation $\overline{\mathcal{F}}$ is determined uniquely by the natural transformation of space-valued functors
$$\overline{\mathcal{F}}_0\colon I \to \overline{F}$$
where $I: (X \to BO(d)) \mapsto X$.

For $\sigma\colon \Delta^n\to BO(d)$ we denote by $\sigma^{\fib}$ the associated fibration over $BO(d)$ replacing $\sigma$. A concrete model for this is the pull-back
\[\xymatrix{
\sigma^{\fib} \ar[rr] \ar[d] && BO(d)^I \ar[d]^{\theta^\univ}\\
BO(d)\times \Delta^n \ar[rr]^{\id_{BO(d)\times \sigma}}& & BO(d)\times BO(d)
}\]
together with the projection to $BO(d)$. These maps $\sigma^{\fib}\to BO(d)$ induce a homotopy equivalence
\[\hocolim_{\sigma\in\simp BO(d)} \sigma^{\fib} \to   BO(d)\]
which fits into a commutative square
\[\xymatrix{
{\hocolim_{\sigma} \sigma^{\fib}} \ar[d]^\simeq \ar[rr]^{\overline{\mathcal{F}}_0(\sigma^{\fib})} 
 && {\hocolim_{\sigma} F(\sigma^{\fib})} \ar[d]
\\
BO(d) \ar[rr]^{\overline{\mathcal{F}}_0(\mathrm{id}_{BO(d)})} && F(\mathrm{id}_{BO(d)})
}\] 
Since the functor $I$ is strongly excisive, \addws{as covariant functor on the category of spaces over $BO(d)$}, it follows similarly that the natural transformation $\overline{\mathcal{F}}_0$ 
is determined uniquely, up to homotopy, by its restriction to the objects $(\sigma^{\fib} \to BO(d))$,
\[
\overline{\mathcal{F}}_{0, \Delta}(\sigma^{\fib} \to BO(d)) \colon \sigma^{\fib} \to F(\sigma^{\fib}),\]
regarded as a natural transformation of functors on the simplex category of $BO(d)$.

Note that the domain of this restricted natural transformation is a functor that takes values in contractible spaces. \addws{It follows that such natural transformations give rise to elements in the homotopy limit of the target functor:} passing to the homotopy limits, we obtain a map of spaces
\[\holim(\overline{\mathcal{F}}_{0, \Delta})\colon \holim_{\sigma\in \simp BO(d)} \sigma^{\fib} \to \holim_{\sigma \in\simp BO(d)} F(\sigma^{\fib}).\]
The domain of this map is contractible, so its homotopy class is determined by an element  
\begin{equation}\label{eq:element_x_f}
x_{\mathcal{F}} \in \pi_0 \holim_{\sigma\in \simp BO(d)} F(\sigma^{\fib}).
\end{equation}
\addws{Moreover the natural transformation $\overline{\mathcal{F}}_{0,\Delta}$ (and therefore $\mathcal{F}$ itself) is 
determined uniquely, in the homotopy category, by this element. This follows from the description of the homotopy limit as the derived mapping space out of the constant contractible diagram.}

Note that the homotopy limit in \eqref{eq:element_x_f} is by definition the target of the coassembly map for $F$ applied to $\theta^\univ$. Since $F$ is strongly excisive, 
we conclude that $x_{\mathcal{F}}$ lifts uniquely along the coassembly map to an element
\[y_{\mathcal{F}} \in \pi_0 F(\theta^\univ).\]

Hence the natural transformation $\mathcal{F}$ is determined, uniquely in the homotopy category, by the element $y_{\mathcal{F}}$. Then it remains to show that 
$y_{\mathcal{F}} =\mathcal{F}[D^d_\univ]$. To do this, consider the commutative \addws{diagram}:
\[\xymatrix{
\pi_0 \Omega B\calC^\dell(\theta^\univ)_{\bullet} \ar[rr]^(.55){\mathcal{F}} \ar[d]^{\nabla}
 && \pi_0 F(\theta^\univ) \ar[d]^{\nabla}_\cong
\\
\pi_0 \holim_\sigma \Omega B\calC^\dell(\sigma^{\fib})_{\bullet} \ar[rr]^{\mathcal{F}} \ar[d]_\cong^{\tr}
 && \pi_0 \holim_\sigma F(\sigma^{\fib})
\\
 \pi_0 \holim_\sigma Q_+(\sigma^{\fib}) \ar[rru]^{\overline{\mathcal{F}}}  \ar[d]_\cong
&& \pi_0 \holim_\sigma (\sigma^{\fib}) \ar@{_{(}->}[ll]  \ar[u]_{\overline{\mathcal{F}}_{0, \Delta}} \ar@{=}[d]
\\
 [BO(d), QS^0] && \ast \ar@{_{(}->}[ll]
}\]
where the bottom horizontal map is the inclusion of the constant map at $1$. By construction, $y_{\mathcal{F}}$ is the image of 
$\ast$ in $\pi_0 F(\theta^\univ)$ under the composite map in the right column. 

Thus, it remains to show that under the left vertical composite, $D^d_\univ$ maps to the constant homotopy class $BO(d)\to QS^0$ at $1$. But 
this class is obtained from the usual parametrized transfer of the universal disk bundle $p_{\mathscr{D}}$, \addws{using the identifications 
$E_{\mathscr{D}} \stackrel{l^{\univ}}{\simeq} BO(d)^I \simeq BO(d)$ over $BO(d)$. Therefore $D^d_{\univ}$ indeed maps to the constant homotopy 
class at $1$ by the homotopy invariance of the transfer map.} \qed

\begin{rem} \label{rem:stronger_mapp_prop}
One can prove the space-level  version of Theorem \ref{thm:mapping_property} (see Remark \ref{rem:space_level_mapping_property}) along the same lines, replacing 
homotopy classes of maps by mapping spaces throughout. A more concise argument can be given using an iterative application of Theorem \ref{thm:mapping_property} 
as follows. Let $F$ be strongly excisive and denote by $\Omega F$ the bivariant theory obtained by applying the (derived) loop functor of spectra at every $\theta$. 
There is a homotopy commutative  square 
\[\xymatrix{
 \map(\Omega B\calC_\bullet^\dell, \Omega^n F) \ar[r]\ar[d]^\simeq
 & \Omega^n \map(\Omega B\calC_\bullet^\dell, F) \ar[d]^\simeq
 \\
 \map(\overline{\Omega B\calC_\bullet^\dell}, \Omega^n \overline{F}) \ar[r]^\simeq
 &  \Omega^n \map( \overline{\Omega B\calC_\bullet^\dell}, \overline{F})
 }\]
where the vertical maps are given by applying the  functor $F\mapsto \overline F$; these maps are weak equivalences by a space-level version of Proposition 
\ref{lem:maps_into_excisive_theory} (see Remark \ref{rem:stronger_univ_coassembly}). The lower horizontal map is a weak equivalence because one knows, in this case,
that the mapping spaces in the hammock localization are equivalent to the derived mapping spaces. We conclude that  the upper horizontal arrow is also a weak 
equivalence. This implies 
\[
\pi_n \bigr(\map(\Omega B\calC_\bullet^\dell, F)\bigl) \cong [\Omega B\calC_\bullet^\dell, \Omega^n F] 
\]
and therefore the space-level version of the mapping property follows from Theorem \ref{thm:mapping_property}.
\end{rem}

\section{Proof of the Index Theorem} \label{proof-of-DWW}

\subsection{The map to $A$-theory}\label{A-theory}

Recall from \cite{Boekstedt-Madsen(2014), Raptis-Steimle(2014), Tillmann(1999)} that there is a map from the loop space 
of the classifying space of the standard $d$-dimensional cobordism category (with or without boundaries) to $A(BO(d))$, Waldhausen's algebraic $K$-theory of the space $BO(d)$. Using the bivariant extension of $A$-theory \cite{Williams(2000)} (see \cite[Section 3]{Raptis-Steimle(2014)} for a detailed discussion), it is easy to extend  
the definition of this map fiberwise to parametrized cobordism categories and also allow arbitrary parametrized 
$\theta$-structures (see also \cite[6.1]{Raptis-Steimle(2014)}).

We start by defining a bivariant transformation to bivariant $A$-theory 
\begin{equation} \label{pre_tau_map} \tau(\theta)\colon \Omega B \calC^\dell (\theta) \to A\fibr{X}{B}.\end{equation}

\begin{rem} \label{comparison}
We recall that bivariant $A$-theory is defined for general fibrations $E \to B$. Since a fibration $\theta: X \to BO(d)\times B$ yields, by projection, a fibration over $B$, we may view the 
correspondence $\theta \mapsto A(X \to B)$ as a bivariant theory in our sense.
\end{rem}

The bivariant transformation \eqref{pre_tau_map} is obtained from the geometric realization of a simplicial map
$$N_{\bullet} \calC^\dell(\theta) \to  w S_{\bullet} \Rfd\fibr{X}{B}.$$
This is defined on $k$-simplices as follows: a $k$-simplex in the domain is a bundle of cobordisms $E[a_0, a_k]$ over $B$, which is the union of a sequence of 
composable bundles of cobordisms, 
$$E[a_0, a_1], \ E[a_1, a_2], \ \cdots, \ E[a_{k-1}, a_k],$$
and it is endowed with a fiberwise tangential $\theta$-structure. It is mapped  to the diagram of retractive spaces 
$$\mathrm{Ar}[k] \to \Rfd\fibr{X}{B}$$
$$(i \leq j) \mapsto E[a_i, a_j] \cup_{E(a_i)} X $$
where 
$$E(a_i) = E[a_0, a_k] \cap (B \times \{a_i\} \times \RR^{d-1 + \infty})$$
$$E[a_i, a_j] = E[a_0, a_k] \cap (B \times [a_i, a_j] \times \RR^{d-1 + \infty})$$
and the pushout is defined using the tangential structure restricted to $E(a_i)$. This simplicial map induces the bivariant transformation $\tau(\theta)$ in \eqref{pre_tau_map}.
As bivariant $A$-theory is homotopy invariant, it follows that $\tau(\theta)$ extends, canonically in the homotopy category, to a bivariant transformation 
(for which we use the same notation)
\begin{equation} \label{tau_map}
\tau(\theta)\colon \Omega B \calC^\dell (\theta)_\bullet \to A\fibr{X}{B}.
\end{equation}

\begin{rem*}
The simplicial thickening of bivariant $A$-theory may be called the \emph{thick model} of bivariant $A$-theory (cf. \cite{Raptis-Steimle(2014)}). \addws{Strictly speaking,} $\tau(\theta)$ in \eqref{tau_map} is a map to the thick model of bivariant $A$-theory.
\end{rem*}

The infinite loop space structure of bivariant $A$-theory can also be described in terms of a $\Gamma$-space (or $\Gamma$-category) by varying the fibration. 
The value of the $\Gamma$-space for $A(X \to B)$ at $n_+$ is 
$$A\fibr{\coprod_n X}{B}.$$
Since $\tau(\theta)$ is natural with respect to the tangential structure, it is easy to see that it extends to a map of $\Gamma$-spaces and therefore is an infinite loop map.


\subsection{\addws{Comparing bivariant transformations}}

Let $\nabla_A\colon A\to A^\&$ denote the coassembly map for bivariant $A$-theory. We recall that the unit map to $A$-theory is the natural transformation of covariant functors 
\[\eta(Z) \colon Q(Z_+)\to A(Z)\]
that is characterized by the property that it takes values in \addws{the category of} infinite loop spaces and that, for $Z$ 
the point, it sends $1\in \pi_0 QS^0$ to $1\in \pi_0 A(*)$. By Proposition \ref{lem:maps_into_excisive_theory} the unit map extends to a bivariant transformation 
\[\eta(\theta) \colon \Omega^\infty_B \Sigma^\infty_B(X_+) \to A^\&\fibr{X}{B},\]
uniquely in the homotopy category of bivariant theories on fibrations $\theta: X \to B$. \addws{As explained in Remark
\ref{comparison}, this can also be regarded as a bivariant transformation between bivariant theories defined on fibrations 
$X \to BO(d) \times B$, for fixed $d$, by using the projection onto $B$.} 


\begin{thm}\label{thm:factorization_result}
Let $\theta: X \to BO(d) \times B$ be a parametrized tangential structure and $\overline \theta : X \to B$ denote the composition with the projection. Then the following diagram commutes in the homotopy category of infinite loop spaces:
\[\xymatrix{
 \Omega B\calC^\dell(\theta)_{\bullet} \ar[d]_(.4){\tau(\theta)} \ar[rr]^{\tr_{\theta}}
 && \Omega^\infty_B \Sigma^\infty_B(X_+) \ar[d]^(.4){\eta(\overline \theta)}
\\
A\fibr{X}{B} \ar[rr]^{\nabla_{A}(\overline \theta)} 
 && A^\&\fibr{X}{B}
}\]
\addws{Moreover, the corresponding bivariant transformations $\eta \circ \tr$ and $\nabla_A \circ \tau$ agree in the homotopy category of bivariant theories on fibrations over $BO(d) \times B$.}
\end{thm}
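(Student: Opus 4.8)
The plan is to derive Theorem~\ref{thm:factorization_result} as a formal consequence of the Mapping Property (Theorem~\ref{thm:mapping_property}) together with a computation on the universal disk bundle. First I would observe that both composites
\[
\eta(\overline{\theta}) \circ \tr_{\theta} \quad \text{and} \quad \nabla_A(\overline{\theta}) \circ \tau(\theta)
\]
are bivariant transformations from $\Omega B\calC^\dell_\bullet$ to the \emph{same} target bivariant theory, namely $A^\&$ (regarded, via Remark~\ref{comparison}, as a bivariant theory on fibrations $X \to BO(d) \times B$). The crucial point is that $A^\&$ is strongly excisive: it is the coassembly of bivariant $A$-theory, hence strongly excisive by construction (Section~\ref{coassembly}). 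Therefore Theorem~\ref{thm:mapping_property} applies, and each of these two bivariant transformations is determined, uniquely in the homotopy category $\mathscr{H}o(\mathscr{B})$, by its characteristic class evaluated at the universal loop $D^d_\univ \in \pi_0 \Omega B\calC^\dell(\theta^\univ)_\bullet$. So it suffices to check that
\[
\bigl(\eta(\overline{\theta^\univ}) \circ \tr_{\theta^\univ}\bigr)[D^d_\univ] = \bigl(\nabla_A(\overline{\theta^\univ}) \circ \tau(\theta^\univ)\bigr)[D^d_\univ]
\]
in $\pi_0 A^\&(\theta^\univ)$.

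Next I would carry out this evaluation by unwinding both sides on the universal disk bundle $p_{\mathscr{D}}\colon E_{\mathscr{D}} \to BO(d)$, with its chosen neat embedding and $\theta^\univ$-structure $l^\univ$. For the right-hand side: $\tau(\theta^\univ)$ sends the cobordism $p_{\mathscr{D}}$ (from $\varnothing$ to $\varnothing$) to the class of the retractive space $E_{\mathscr{D}} \cup_{\varnothing} X \simeq E_{\mathscr{D}} \amalg BO(d)^I$ in bivariant $A$-theory; since the disk bundle $E_{\mathscr{D}}$ fiberwise deformation retracts onto its zero section, which composed with $l^\univ$ is the homotopy equivalence $BO(d) \simeq BO(d)^I$ over $BO(d)$, this class is precisely the image under $\eta$ of the class $1 \in \pi_0$ of the sphere spectrum, i.e.\ the unit class. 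Then applying $\nabla_A$ and using that $\eta$ factors through $A^\&$ (as it is defined via Proposition~\ref{lem:maps_into_excisive_theory}), the right-hand side equals $\eta^\& $ applied to the unit. For the left-hand side: $\tr_{\theta^\univ}$ sends $D^d_\univ$ to the parametrized Pontryagin--Thom transfer of the disk bundle $p_{\mathscr{D}}$; but as observed at the end of the proof of Theorem~\ref{thm:mapping_property}, using the identifications $E_{\mathscr{D}} \stackrel{l^\univ}{\simeq} BO(d)^I \simeq BO(d)$ over $BO(d)$, the transfer of a (fiberwise) disk bundle is the unit section (the transfer of an identity-like map), so $\tr_{\theta^\univ}[D^d_\univ] = 1 \in \pi_0 \Omega^\infty_B \Sigma^\infty_B(X_+)$ over $BO(d)$. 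Applying $\eta$ then gives exactly the unit class $1 \in \pi_0 A$, which maps under $\nabla_A$ to the same class in $A^\&$ as computed on the right. Hence the two characteristic classes agree, and by Theorem~\ref{thm:mapping_property} the two bivariant transformations agree in $\mathscr{H}o(\mathscr{B})$, which is the assertion of the theorem; the homotopy-commutativity of the displayed square is the special case of this identity obtained by evaluating the bivariant transformations at a given $\theta$ and passing to the underlying map of infinite loop spaces.

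The main obstacle I anticipate is a bookkeeping issue rather than a conceptual one: one must carefully match up the three different but equivalent descriptions of the unit map $\eta$ — its characterization by the value $1 \mapsto 1$ on a point, its extension to a bivariant transformation via Proposition~\ref{lem:maps_into_excisive_theory}, and its concrete model — and likewise verify that ``the transfer of the universal disk bundle is the unit'' is literally the same normalization that pins down $\eta$. Concretely, one needs that the composite $S^0 \to \Sigma^\infty_B(E_{\mathscr{D}}{}_+) \xrightarrow{\text{PT transfer of }p_{\mathscr{D}}} \Sigma^\infty_B(X_+) \xrightarrow{\eta} A(X)$ agrees on $\pi_0$, fiberwise over $BO(d)$, with the $A$-theory characteristic class $\tau$ of the disk bundle; for a single disk $D^d$ over a point this is the classical statement that $\chi(D^d) = 1 = \eta(\mathrm{tr}(D^d))$, i.e.\ the index theorem for the point, which holds because $D^d$ is contractible and both sides are visibly the unit. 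Since everything in sight is natural over $BO(d)$ and both sides were already forced to be the coassembly of this unit class, the parametrized statement follows formally. The remaining routine check is that passing to the homotopy-commutative square of the theorem statement — i.e.\ evaluating at an arbitrary $\theta$ and remembering the $\Gamma$-space/infinite-loop structures — is compatible with all the identifications, which follows from the naturality of coassembly, of $\tr$ (Theorem~\ref{transfer=coassembly}), and of $\tau$, together with the fact that all constructions refine to maps of $\Gamma$-spaces.
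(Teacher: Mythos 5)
Your proposal is correct and follows essentially the same route as the paper's proof: reduce via Theorem~\ref{thm:mapping_property} to comparing the two characteristic classes on the universal loop $D^d_\univ$, then use the fiberwise homotopy equivalences $E_{\mathscr{D}} \stackrel{l^\univ}{\simeq} BO(d)^I \simeq BO(d)$ and naturality (pulling back along $BO(d) \to *$) to identify both with the classical computation $\eta(\tr[D^d]) = 1 = \tau[D^d]$ in $\pi_0 A(*) \cong \mathbb{Z}$ for the disk over a point.
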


\begin{proof}
We show that the corresponding bivariant transformations define a commutative diagram in the homotopy category of bivariant theories on fibrations over $BO(d) \times B$. 
To do this, by Theorem \ref{thm:mapping_property}, \addws{it suffices to show that the two characteristic classes, corresponding to the two possible composites, 
agree on the universal $d$-disk bundle $D^d_\univ$, i.e., 
$$(\eta \circ \tr)[D^d_{\univ}] = (\nabla_A \circ \tau)[D^d_{\univ}] \ \text{in} \ \pi_0 \big( A^{\&}\fibr{BO(d)^I}{BO(d)} \big),$$
where $D^d_{\univ}$ is regarded as a point in $\Omega B \calC^{\dell}(\theta^{\univ})_{\bullet}$. The computation of these two classes is straightforward and 
can be done directly by making use of the fiberwise homotopy invariance of the two constructions in this case. We will also argue this way that it is enough 
to show that the two characteristic classes agree on the $d$-disk $D^d$, considered as a bundle over a point.

As was already noted in the proof of Theorem \ref{thm:mapping_property}, the characteristic class obtained from the bivariant transformation $\tr$,
$$\tr[D^d_{\univ}] \in \pi_0( \Omega^{\infty}_{BO(d)} \Sigma^{\infty}_{BO(d)} BO(d)^I_+),$$ 
is given by the parametrized transfer of $p_{\mathscr{D}} \colon E_{\mathscr{D}} \to BO(d)$ followed by the map induced by the parametrized tangential structure $l^{\univ}$ to $BO(d)^I$. Using that $BO(d)^I \stackrel{\simeq}{\to} BO(d)$, this class can be identified with the parametrized
transfer of $p_{\mathscr{D}}$ followed by the map induced by the homotopy equivalence $p_{\mathscr{D}}$. Since the parametrized transfer is invariant under fiberwise homotopy 
equivalences, it follows that $\tr[D^d_{\univ}]$ can be identified in $\pi_0(\Omega^{\infty}_{BO(d)} \Sigma^{\infty}_{BO(d)} BO(d)_+)$ with the class that arises from the 
parametrized transfer of the trivial $d$-disk bundle over $BO(d)$, which is pulled back from the trivial $D^d$-bundle over a point. 

Similarly, the characteristic class obtained from $\tau$,
$$\tau[D^d_{\univ}] \in \pi_0 A\fibr{BO(d)^I}{BO(d)}$$
is represented by the object in $\Rfd\fibr{BO(d)^I}{BO(d)}$ given by 
$$E_{\mathscr{D}} \sqcup BO(d)^I \to BO(d),$$
as retractive object over $BO(d)^I \to BO(d)$. Using again that $BO(d)^I \stackrel{\simeq}{\to} BO(d)$ and $E_{\mathscr{D}} \simeq BO(d) \times D^d$ over $BO(d)$, 
this class agrees in $\pi_0 A\fibr{BO(d)}{BO(d)}$ with the class represented by the retractive object
$$BO(d) \times D^d \sqcup BO(d) \to BO(d),$$
where $BO(d) \times D^d \to BO(d)$ is the trivial $D^d$-bundle, which is pulled back from the trivial $D^d$-bundle over a point. 

It follows by naturality that both $\tr[D^d_\univ]$ and $\tau[D^d_\univ]$ are determined canonically by the corresponding classes for the $d$-disk $D^d$ considered 
as a bundle over a point and equipped with its canonical framing. Thus, we only need to show that
\[\eta(\tr[D^d]) = \tau[D^d] \ \text{in} \ \pi_0 A(*) \cong \mathbb Z.\]
But both characteristic elements are easily identified with $1 \in \mathbb Z$. }
\end{proof}

\begin{rem}
We could alternatively appeal to Proposition \ref{lem:maps_into_excisive_theory} and reduce to $B=\{*\}$, in which case the claim was proved in 
\cite{Raptis-Steimle(2014)}. However the use of infinite loop space techniques and the covariant functoriality drastically simplify the proof. 
\end{rem}

\begin{proof}[\textbf{Proof of the Index Theorem}]
Let $p: E \to B$ be a bundle of smooth compact $d$-dimensional manifolds. We may assume that $B$ is a CW complex. Then there is a fiberwise smooth embedding of $p$ into $B \times \mathbb{R}^{\infty}$. 
We denote $T^v E$ the Gau\ss{} map of the vertical 
tangent bundle and factor the map
\[(T^v E,p)\colon E\to BO(d)\times B\]
into a homotopy equivalence $l\colon E\to X$, followed by a fibration $\theta\colon X\to BO(d)\times B$. This 
way we obtain a morphism in $\calC^{\dell}(\theta)$ (from $\varnothing$ to $\varnothing$), and therefore an 
element in $\Omega B \calC^{\dell}(\theta)$ which we denote $(E, l)$. 

Now the class 
\[\tau[E, l]\in \addws{\pi_0}A\fibr{X}{B}\]
is given by the retractive space $E \sqcup X$ over $X$. This object can be identified canonically, via the homotopy equivalence $l$, with the \emph{bivariant $A$-theory characteristic} $\chi(p)\in A(p)$ of $p$ which is given by the  
retractive space $E\times S^0$ over $E$. It was observed in \cite{Williams(2000)} that the parametrized $A$-theory characteristic of $p$ is the image of that element 
under the coassembly map (cf. \cite{Raptis-Steimle(2014)}). 
Then the claim follows directly from Theorem \ref{thm:factorization_result}, after noting \addws{that  
$$\tr[E, l] \in \pi_0(\Omega^{\infty}_B \Sigma^{\infty}_B (X_+))$$} 
is indeed, along the identification $l$, the parametrized transfer of the  bundle $p$. 
\end{proof}

\begin{rem}
A finer statement can be made if we view $\chi(p)$ and $\eta\tr(p)$ as points in $\Omega^\infty A^\&(p)$, rather than just $\pi_0$-classes. Following Remark \ref{rem:space_level_mapping_property}, the diagram in Theorem \ref{thm:factorization_result} actually commutes up to a preferred homotopy (obtained from the preferred path between $\eta(\tr[D^d])$ and $\tau[D^d]$ which is implicit in the proof). Hence our proof actually provides a canonical (homotopy class of) paths between the two expressions in the Index theorem, rather than just identifying their $\pi_0$-classes.
\end{rem}

\begin{rem}
Waldhausen constructed a trace map
\[T\colon A(X)\to Q(X_+)\]
which is an infinite loop map that splits the unit map and is natural in $X$. This extends to a bivariant transformation $T^{\&}: A^{\&} \to Q^{\&}$ 
between the associated strongly excisive theories on fibrations $X \to B$. Then one can verify that the correspondence 
$$p \mapsto T^{\&} \Bigg(\nabla_{A,p}\bigg(\chi(p)\bigg)\Bigg)$$
satisfies the axioms from \cite[Definition 1.4]{Klein-Williams(2009)} that characterize the parametrized transfer map (see also \cite{Becker-Schultz(1998)}). 

Indeed, axiom (A1) on naturality is clear. For the normalization axiom (A2), we note that the parametrized $A$-theory characteristic $\nabla_{A, \id_B}\bigg(\chi(\id_B)\bigg)$ is 
identified with the constant map $B\to A(*)$ with value $[S^0]\in A(*)$; and by \cite[Theorem 5.1]{Waldhausen(1978)}, this element is mapped under the trace to the 
non-base point in $S^0\subset QS^0$. 

To verify the product axiom (A3), we recall that $A$-theory comes with an external pairing
\[A(X)\times  A(X')\to A(X\times X'),\]
given by applying $K$-theory to the external smash  product functor on the level of retractive spaces. This external pairing is natural in both variables 
so we get an induced pairing
\[\sections{A_B(E)}{B}\times \sections{A_{B'}(E')}{B'}\to \sections{A_{B\times  B'} (E\times E')}{B\times B'}\]
for two given fibrations $p\colon E\to B$ and $p'\colon E'\to B'$; under this construction the elements $\nabla_{A,p}(\chi(p))$ and $\nabla_{A,p'}(\chi(p'))$ (when defined) 
pair to the element 
$$\nabla_{A, p \times p'}(\chi(p\times p')).$$ 
The claim follows from this because Waldhausen's trace is multiplicative \cite[Proposition 3.7]{Dorabiala-Johnson(2012)}.

Finally, to verify the additivity axiom (A4), we note that the bivariant characteristics of the fibrations in \addws{axiom (A4)} of \cite{Klein-Williams(2009)} 
satisfy
\[\chi(p) = (j_1)_*\chi(p_1) + (j_2)_*\chi(p_2) - (j_\emptyset)_*\chi(p_\emptyset).\]
The required identity follows from this because both coassembly and trace are infinite loop maps and in particular additive.
\end{rem}


\bibliographystyle{amsplain}

\begin{thebibliography}{10}

\bibitem{Becker-Gottlieb(1975)}
J.~C.~Becker and D.~H.~Gottlieb, \emph{The transfer map and fiber bundles}, 
Topology 14 (1975), 1--12. 

\bibitem{Becker-Gottlieb(1976)}
J.~C.~Becker and D.~H.~Gottlieb, \emph{Transfer maps for fibrations and duality},
Compositio Math. 33 (1976), no. 2, 107--133. 

\bibitem{Becker-Schultz(1998)}
James~C.~Becker and Reinhard~E.~Schultz, \emph{Axioms for bundle transfers and
  traces}, Math. Z. 227 (1998), no.~4, 583--605. 

\bibitem{Boekstedt-Madsen(2014)}
Marcel~B{\"o}kstedt and Ib~Madsen, \emph{The cobordism category and
  {W}aldhausen's {$K$}-theory}, An alpine expedition through algebraic
  topology, Contemp. Math., vol. 617, Amer. Math. Soc., Providence, RI, 2014,
  pp.~39--80. 

\bibitem{Crabb-James}
Michael~Crabb and Ioan~James, \emph{Fibrewise homotopy theory},
Springer Monographs in Mathematics. Springer-Verlag London, Ltd., London, 1998.
  
\bibitem{Dorabiala-Johnson(2012)}
Wojciech~Dorabia{\l}a and Mark~W.~Johnson, \emph{Factoring the
  {B}ecker-{G}ottlieb transfer through the trace map}, Pure Appl. Math. Q.
  8 (2012), no.~1, 133--173. 

\bibitem{DHKS}
William~G.~Dwyer, Philip~S.~Hirschhorn, Daniel~M.~Kan, 
and Jeffrey~H.~Smith,  \emph{Homotopy limit functors on model categories and homotopical categories}, Mathematical Surveys and Monographs 113, American Mathematical Society, 
Providence, RI, 2004.

\bibitem{Dwyer-Weiss-Williams(2003)}
W.~Dwyer, M.~Weiss and B.~Williams, \emph{A parametrized index theorem for the
  algebraic {$K$}-theory {E}uler class}, Acta Math. 190 (2003), no.~1,
  1--104. 

\bibitem{Fulton-MacPherson(1981)}
William~Fulton and Robert~MacPherson, \emph{Categorical framework for the study of singular spaces},
Mem. Amer. Math. Soc. 31 (1981), no.~243.

\bibitem{GMTW(2009)}
S{\o}ren~Galatius, Ib~Madsen, Ulrike~Tillmann and Michael Weiss, \emph{The
  homotopy type of the cobordism category}, Acta Math. 202 (2009),  no.~2, 
195--239. 

\bibitem{Genauer(2008)}
Josh~Genauer, \emph{Cobordism categories of manifolds with corners}, 
Trans. Amer. Math. Soc.  364 (2012),  no. ~1, 519--550. 

\addws{\bibitem{Hi}
Philip~S.~Hirschhorn, \emph{Model categories and their localizations.}
Mathematical Surveys and Monographs, 99. American Mathematical Society, Providence, RI, 2003. }

\bibitem{Klein-Williams(2009)}
John~R.~Klein and Bruce~Williams, \emph{The refined transfer, bundle
  structures, and algebraic {$K$}-theory}, J. Topol. 2 (2009), no.~2,
  321--345. 

\bibitem{Nguyen(2015)}  
Hoang~Kim~Nguyen, \emph{On the infinite loop space structure of the cobordism category},
\addws{Algebr. Geom. Topol. 17 (2017), no. 2, 1021--1040.}

\bibitem{Raptis-Steimle(2014)}
George~Raptis and Wolfgang~Steimle, \emph{On the map of {B}\"okstedt-{M}adsen
  from the cobordism category to {$A$}-theory}, Algebr. Geom. Topol.14 (2014), no.~1, 
299--347. 
  
\bibitem{Segal(1974)}
Graeme~Segal, \emph{Categories and cohomology theories}, Topology 13 (1974), 293--312.

\bibitem{Stolz-Teichner(2011)}
Stephan Stolz and Peter Teichner, \emph{Supersymmetric field theories and generalized cohomology}, 
in: Mathematical foundations of quantum field theory and perturbative string theory, 
Proc. Sympos. Pure Math. 83 (2011), Amer. Math. Soc., Providence, RI, 279--340. 


\bibitem{Tillmann(1999)}
Ulrike~Tillmann, \emph{A splitting for the stable mapping class group},
Math. Proc. Cam. Phil. Soc. 127 (1999), no.~1, pp. 55-65. 

\bibitem{Waldhausen(1978)}
Friedhelm~Waldhausen, \emph{Algebraic {$K$}-theory of topological spaces.
  {II}}, Algebraic topology, {A}arhus 1978 ({P}roc. {S}ympos., {U}niv.
  {A}arhus, {A}arhus, 1978), Lecture Notes in Math., vol. 763, Springer,
  Berlin, 1979, pp.~356--394.

\bibitem{Waldhausen(1985)}
\bysame, \emph{Algebraic {$K$}-theory of spaces}, Algebraic and geometric
  topology ({N}ew {B}runswick, {N}.{J}., 1983), Lecture Notes in Math., vol.
  1126, Springer, Berlin, 1985, pp.~318--419.

\bibitem{WeWi(1995)}
Michael~Weiss and Bruce~Williams, \emph{Assembly}, Novikov conjectures, index theorems and 
rigidity, Vol.\ 2, (Oberwolfach, 1993), London Math. Soc. Lecture Note Ser.,
no. 227 (1995), Cambridge Univ. Press, Cambridge, pp. 332--352.

  
\bibitem{Williams(2000)}
Bruce~Williams, \emph{Bivariant {R}iemann {R}och theorems}, Geometry and topology:
  {A}arhus (1998), Contemp. Math., vol. 258, Amer. Math. Soc., Providence, RI,
  2000, pp.~377--393.

\end{thebibliography}
\providecommand{\MR}{\relax\ifhmode\unskip\space\fi MR }
\providecommand{\MRhref}[2]{%
  \href{http://www.ams.org/mathscinet-getitem?mr=#1}{#2}
}
\providecommand{\href}[2]{#2}

\end{document}